\numberwithin{equation}{section}
\theoremstyle{plain} 
\newtheorem{theorem}{Theorem}[section]
\newtheorem{proposition}[theorem]{Proposition}
\newtheorem{corollary}[theorem]{Corollary}
\newtheorem{notation}[theorem]{Notation}
 \def\eps{{\epsilon}}
\def\C{{\mathbb C}}
\def\R{{\mathbb R}}
\def\S{{\mathbb S}}
\def\H{{\mathbb H}}
\def\CP{{\mathbb C\mathbb P}}
\def\ov{\overline}
\newcommand{\pl}{\parallel}
\title[The bifurcation diagram of cubic polynomial vector fields]{The bifurcation diagram of cubic polynomial vector fields on $\CP^1$}
\author{
C.~Rousseau}\address{Department of mathematics and statistics\\
University of Montreal\\
C.P. 6128, succ. centre-ville\\
Montreal, Quebec, H3C 3J7\\
Canada}
\email{rousseac@dms.umontreal.ca}
\thanks{This
work is supported by NSERC in Canada.}
\keywords{Complex polynomial vector fields, bifurcations}
\subjclass{Primary 34M45, Secondary 32G34}
\begin{document}
\maketitle

\begin{abstract} In this paper we give the bifurcation diagram of the family of cubic vector fields $\dot z=z^3+ \eps_1z+\eps_0$ for $z\in \CP^1$, depending on the values of $\eps_1,\eps_0\in\C$. The bifurcation diagram is in $\R^4$, but its conic structure allows describing it for parameter values in $\S^3$. There are two open simply connected regions of structurally stable vector fields separated by surfaces corresponding to bifurcations of homoclinic connections between two separatrices of the pole at infinity. These branch from the codimension 2 curve of double singular points.  We also explain the bifurcation of homoclinic connection in terms of the description of Douady and Sentenac of polynomial vector fields.  \end{abstract}

\section{Introduction} 

This study of polynomial vector fields  
\begin{equation} 
\dot z = z^{k+1} + \sum_{j=0}^{k-1} \eps_jz^j, \qquad z\in \CP^1,\label{pol_vf}\end{equation} 
on $\CP^1$ was initiated by Douady and Sentenac in \cite{DS}. The point at infinity is a pole of order $k-1$ with $2k$ separatrices. In their long paper, Douady and Sentenac describe how the phase portrait can be obtained from the position of the separatrices of $\infty$. This allows them describing at length the geometry of the generic vector fields in this family, which are those for which there are no homoclinic connections between any two separatrices of $\infty$. They also show that there are exactly $C(k)= \frac{\binom{2k}{k}}{k+1}$  connected components of generic vector fields in the parameter space 
$\eps= (\eps_{k-1}, \dots, \eps_0),$ and that these are simply connected. ($C(k)$ is the $k$-th Catalan number.)
Their proof is extremely ingenious and uses that any generic polynomial vector field \eqref{pol_vf} is completely determined (possibly up to some symmetry by a rotation of order $k$) by:
\begin{itemize}
\item A topological invariant describing how the separatrices of infinity end generically at finite singular points, thus dividing $\CP^1$ in $k$ connected components. The union of the separatrices is called the \emph{connecting graph}.
\item An analytic invariant given by $k$ numbers $\tau_1, \dots, \tau_k\in \H$, where $\H\subset \C$ is the upper half-plane, and essentially equivalent to the collection of eigenvalues at each singular point. These numbers correspond to \lq\lq complex travel times\rq\rq\ along curves joining sectors to sectors at $\infty$, without cutting the connecting graph (see details below).  \end{itemize}

The results of Douady and Sentenac were further generalized to non generic vector fields by Branner and Dias \cite{BD}, who introduced invariants for non generic vector fields, but the paper \cite{BD} does not describe the bifurcation diagram. 

One of the goals of Douady and Sentenac was to develop tools for analyzing the unfoldings of parabolic points of germs of diffeomorphims on $(\C,0)$ (i.e. fixed points with multiplier equal to $1$). One of the first attempts to understand the unfolding of a codimension $1$ parabolic point goes back to Martinet \cite{M}, where he considered unfoldings for parameter values for which the unfolded fixed points are hyperbolic. He could show that the comparison of the two linearizing charts at each fixed point converges to the Ecalle-Voronin modulus of the parabolic point: indeed the domains of linearization tend to the sectors used in the definition of the Ecalle-Voronin modulus. But, until the thesis of Lavaurs \cite{L}, no method would allow studying the sector of parameter values containing the values for which the unfolded fixed points have multipliers on the unit circle. There, a new idea was introduced, namely to unfold the sectors allowing to define the Ecalle-Voronin modulus into  sectors with vertices at the two singular points on which almost unique changes of coordinates to the normal form exist. Lavaurs' thesis covered a sector of parameter values complementary to the one studied by Martinet. By generalizing the method of Lavaurs, the article \cite{MRR} finally gave a complete modulus for the unfolding of a germ of codimension $1$ diffeomorphism. 

The further generalization to codimension $k>1$ requires precisely the study of the polynomial vector fields on $\CP^1$ initiated by Douady and Sentenac, and this was the motivation for their study. 
A work in progress by Colin Christopher and the author on the realization for the parabolic point of codimension $2$ is using exactly the bifurcation diagram for cubic vector fields presented in this paper. 

It is striking that the geometry of the family of polynomial vector fields \eqref{pol_vf} is relevant for a large class of bifurcation problems, namely the bifurcations in the unfoldings of several resonant singularities of codimension $k$, which all have the property that the change of coordinates to normal form is divergent but $k$-summable: such singularities include the codimension $k$ parabolic fixed point of a 1-dimensional complex diffeomorphism, the resonant saddle point in $\C^2$ (and hence the Hopf bifurcation), the saddle-node (studied in \cite{RT}), and the non resonant irregular singular point of Poincar\'e rank $k$ (studied in \cite{HLR}). The works \cite{RT} and \cite{HLR} give analytic moduli of classification for the analytic unfoldings of the codimension $k$  singularities, but none of them present the bifurcation diagram. 

\bigskip

The bifurcation diagram of cubic vector fields presented here is also interesting in itself. The phase portraits are organized by the pole at infinity: in its neighborhood,  the trajectories are organized as for a saddle with two attracting and two repelling separatrices. The only (real) codimension $1$ bifurcations are bifurcations of homoclinic connections between the separatrices of $\infty$. The higher codimension bifurcations are bifurcations of multiple singular points, and simultaneous bifurcations of lower codimension. 
The parameter space is $4$-dimensional and the bifurcation diagram has a conic structure, allowing to describe it for parameter values in $\S^3$.  The main difficulty of the study is understanding the nontrivial spatial organization of the bifurcation surfaces in parameter space. 
The highest codimension bifurcations are the organizing centers of the bifurcation diagram.
\bigskip

It  is also natural to consider \eqref{pol_vf} as an ODE with complex time. If $z_1, \dots, z_{k+1}$ are the singular points, then $\Omega= \CP^1\setminus\{z_1, \dots, z_{k+1}\}$, is the orbit of a single point. But one has to pay attention that the time is ramified when $k>1$. For generic vector fields, Douady and Sentenac exploited this idea by covering $\Omega$ with sectors corresponding to strips in the time variable $t\in \C$. In the case $k=2$, we show how to generalize their description when the vector field has a homoclinic connection.

\section{The study of cubic polynomial vector fields }\label{sec:pol} 

In this section, we study the different bifurcations
of the real trajectories of the vector field
\begin{equation}\dot z=\frac{dz}{dt}=P_\eps(z)= z^3+\eps_1z+\eps_0, \qquad z\in \CP^1, \qquad \eps=(\eps_1,\eps_0)\in \C^2.\label{vf_3}\end{equation}

\subsection{Preliminaries on polynomial vector fields in $\C$}

\begin{theorem} {\rm \cite{DS}} \begin{enumerate}
\item A polynomial vector field \eqref{pol_vf} has a pole at infinity of order $k-1$ and $2k$ separatrices (see Figure~\ref{eps_zero}(a))
\item A simple  singular point in the finite plane is a strong focus, a center, or a radial node. A sufficient condition for a singular point $z_j$ to be a center is that its eigenvalue belongs to $i\R^*$. In that case, the center is isochronous and its period (given by the residue theorem) is $\frac1{2\pi| i P_\eps'(z_j)|}$.
\item A polynomial vector field \eqref{pol_vf} has no limit cycle.
\item Whenever the singular points split into two groups $\{z_j\, :\, j\in I_1\}$ and $\{z_j\, :\, j\in I_2\}$, such that $\sum_{j\in I_1}\frac1{P_\eps'(z_j)}\in i\R^*$ (and hence $\sum_{j\in I_2}\frac1{p_\eps'(z_j)}\in i\R^*$, since $\sum_{j=1}^{k+1}\frac1{p_\eps'(z_j)}=0$ on $\CP^1$), then there exists a homoclinic connection between two separatrices of $\infty$, which separates the two groups of singular points. 
 In the particular case $\#I_\ell=1$, then the corresponding singular point is a center. 
\item A separatrix of $\infty$ either ends in a finite singular point, or makes a homoclinic loop by merging with a second separatrix of $\infty$. 
\end{enumerate} \end{theorem}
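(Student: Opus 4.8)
The plan is to use that \eqref{vf_3} is the real flow of a holomorphic vector field, so it carries the (multivalued) first integral $T(z)=\int dz/P_\eps(z)$, which satisfies $\dot T=1$ along trajectories. Hence $h:=\operatorname{Im}T$ is locally constant along every real trajectory, and the trajectories are precisely the arcs of level curves of $h$, i.e. the horizontal trajectories of the meromorphic $1$-form $\omega=dz/P_\eps(z)$ on $\CP^1$. First I would record the two local pictures I need. At a finite simple singular point $z_j$ the form $\omega$ has a simple pole, and the local model $T\sim\lambda_j^{-1}\log(z-z_j)$, with $\lambda_j=P_\eps'(z_j)$, shows that a trajectory can accumulate at $z_j$ only by converging to it (as a focus, node, or center, cf. part~(2)). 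At $\infty$ the form $\omega$ has a zero of order $k-1$, and the local model $T\sim -w^{k}/k$ with $w=1/z$ shows that exactly $2k$ horizontal trajectories, the separatrices, reach $\infty$ (those with $\operatorname{Im}(w^{k})=0$), while every other trajectory merely passes through a punctured neighbourhood of $\infty$ without converging to it.

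Given a separatrix $\gamma$, which reaches $\infty$ in finite time along one of these $2k$ directions, I would follow it in the opposite time direction into the finite plane and study its forward $\omega$-limit set $L\subset\CP^1$. Since $\CP^1$ is compact and the singular points are finite in number, $L$ is a nonempty, compact, connected, invariant set, so Poincar\'e--Bendixson on $\S^2$ leaves three possibilities: a single singular point, a periodic orbit, or a graphic. The periodic-orbit case is excluded because \eqref{vf_3} has no limit cycle (part~(3)) and because $\gamma$, whose closure contains the singular point $\infty$, cannot itself be a closed orbit of the finite plane. The graphic case, and any residual recurrence, is excluded using $h$: along $\gamma$ the value $h\equiv c$ is constant, whereas one full turn of $\gamma$ near a graphic changes the branch of $T$ continued along $\gamma$ by the sum of the periods $2\pi i/\lambda_j$ of the enclosed singular points; unless that period sum is real the level $c$ cannot be revisited, and when it is real the nearby leaves close up, contradicting that $\gamma$ is not periodic. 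Hence $L$ reduces to a single singular point.

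Finally I would read off the dichotomy. If $L$ is a finite singular point $z_j$, then $\gamma$ ends in a finite singular point, the first alternative. If $L=\{\infty\}$, then $\gamma$ converges to the zero of $\omega$ at $\infty$, which by the local model is possible only if $\gamma$ arrives along one of the $2k$ separatrix directions; thus $\gamma$ both leaves $\infty$ and returns to $\infty$, merging with a second separatrix into a loop based at the pole, i.e. a homoclinic connection, the second alternative. The main obstacle is the middle step: excluding recurrent and graphic limit sets rigorously, since $h$ is only locally defined, its monodromy around $z_j$ being $2\pi\operatorname{Re}(1/\lambda_j)$. Controlling this monodromy, together with Poincar\'e--Bendixson on $\S^2$, is exactly what forces $L$ to degenerate to a single point; the rest is local normal-form bookkeeping.
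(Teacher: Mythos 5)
First, a point of comparison: the paper itself offers no proof of this theorem --- it is quoted as a preliminary result from Douady--Sentenac \cite{DS} --- so there is no internal argument to measure yours against. That said, your framework (the rectifying coordinate $T=\int dz/P_\eps$, the function $\mathrm{Im}\,T$ constant along real trajectories, and its monodromy $2\pi i\sum_j 1/P_\eps'(z_j)$ around singular points) is exactly the framework the paper attributes to \cite{DS} and exploits in Section~\ref{sec:DS}, and your Poincar\'e--Bendixson-plus-monodromy treatment of item (5) is essentially sound, modulo two repairable points: the vector field blows up at the pole, so one must rescale it by a positive factor to get a flow on $\S^2$ to which Poincar\'e--Bendixson applies; and your phrase \lq\lq when it is real the nearby leaves close up\rq\rq\ is false on the outside of a loop through $\infty$, because the pole is a critical point of $T$, so nearby level curves of $\mathrm{Im}\,T$ branch there and are deflected into adjacent sectors rather than closing up. The correct conclusion in that case is that the crossings of a transversal by the spiraling orbit all lie on a single level of the (now single-valued) $\mathrm{Im}\,T$, which in the limit is the level of the graphic itself, forcing the orbit to coincide locally with the graphic --- a contradiction.

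The genuine gap is item (4), which your proposal never addresses and which your closing sentence dismisses as \lq\lq local normal-form bookkeeping.\rq\rq\ It is nothing of the sort: it is a global existence statement, asserting that a purely algebraic condition on the residues, $\sum_{j\in I_1}1/P_\eps'(z_j)\in i\R^*$, forces the existence of a homoclinic connection separating the two groups of singular points. Nothing in your limit-set analysis produces a connection from this hypothesis; your argument only classifies what a given separatrix can do, it never shows that the residue condition compels two separatrices to merge. Even the easiest case $\#I_\ell=1$ needs an argument you do not have: the eigenvalue condition makes $z_j$ a center, and one must then show that the outer boundary of its period annulus is precisely a homoclinic loop through the pole; the case $\#I_\ell\ge 2$ is harder still and is where the strip-gluing machinery of \cite{DS} genuinely enters. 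A secondary, smaller flaw: you invoke item (3) to exclude the periodic-orbit case of Poincar\'e--Bendixson, but item (3) is itself part of the statement being proved. The circularity is easily repaired with the tools you already set up (real monodromy around a closed orbit makes $\mathrm{Im}\,T$ single-valued in a neighborhood, so the orbit sits in an annulus of closed orbits of equal period and cannot be a limit cycle, nor can any orbit spiral onto it), but as written the logical order is backwards: items (3) and (5) must be established together from the monodromy argument, not one from the other.
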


\subsection{The vector field at $\infty$}

When $\eps=0$, the vector field \eqref{vf_3} has 
a triple singular point at the origin and a pole of order 1 at infinity.  The pole has two attracting and two repelling separatrices converging to the origin (see Figure~\ref{eps_zero}).
\begin{figure}\begin{center}
\subfigure[A neighborhood of $\infty$] {\includegraphics[width=4.5cm]{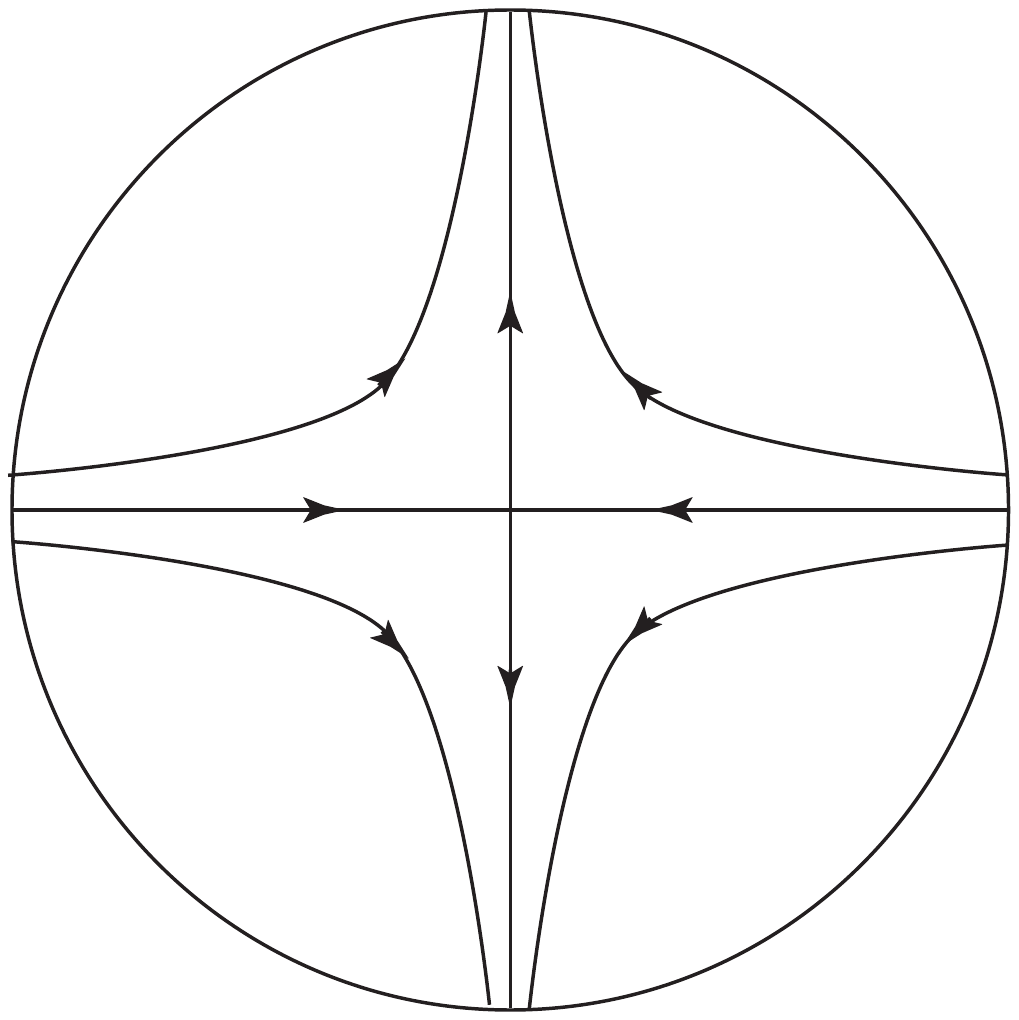}}\qquad \qquad \subfigure[A disk in the finite plane for $\eps=0$]{\includegraphics[width=4.5cm]{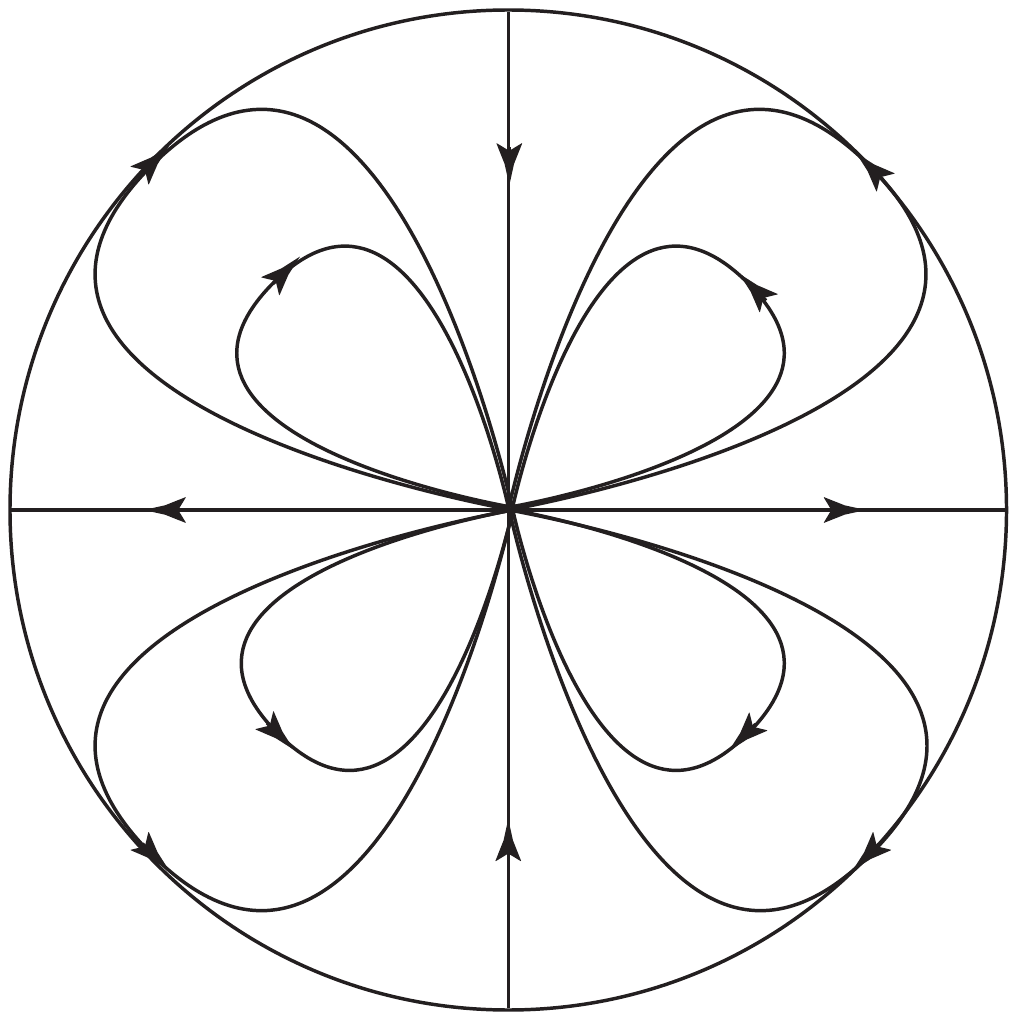}}
\caption{To give the phase portrait on $\CP^1$, it suffices to give it on two disks: one around $\infty$ and one around $0$. (a) gives the phase portrait near $\infty$ for all $\eps$, and (b) near the origin for $\eps=0$. Note that these phase portraits  naturally define four quadrants in $\CP^1$.} \label{eps_zero}\end{center}\end{figure}

For any $\eps$, the pole at infinity and its separatrices organize the phase space as noted by Douady and Sentenac in \cite{DS}. 
\smallskip

The full phase portrait on $\CP^1$ is obtained by gluing the phase portrait near $\infty$ given in Figure~\ref{eps_zero}(a) with the phase portrait on a disk in the finite plane. 

\subsection{The conic structure}

The  change $(z,t)\mapsto (Z,T)=\left(\delta z,\frac{t}{\delta^2}\right)$  transforms \eqref{vf_3} into $\frac{dZ}{dT} = Z^3+ \eps_1\delta^2Z+ \eps_0\delta^3$. This induces an equivalence relation over the parameter space
\begin{equation}\eps\simeq\eps' \Longleftrightarrow \exists \delta >0 \: (\eps_1',\eps_0')= (\delta^2\eps_1,\delta^3\eps_0).\label{simeq}\end{equation}
Hence, it is sufficient to describe the intersection of the bifurcation diagram 
with a $3$-dimensional real sphere $\S^3=\{\eps\; : \; \pl\eps\pl=\mathrm{Cst}\}$, or use charts of the form $|\eps_j|=\mathrm{Cst}$. All strata will be cones on this bifurcation diagram and will be adherent to $\eps=0$.  In particular, we can
always suppose that $\eps\neq0$. Of course any $\mathrm{Cst}$ can be taken. Depending on the context, we may choose different values so as to simplify the computations.

Note that $\S^3$ is toplogically equivalent to the completion of $\R^3$ with a point at infinity, which we will denote $\ov{\R^3}$.

\subsection{Bifurcations of real codimension $1$} 
Because of the special form of the system, all bifurcations of multiple singular points are of complex codimension $1$ or $2$, hence of real codimension $2$ or $4$. 

Therefore, the only bifurcations of real codimension $1$ are the bifurcations of homoclinic loop when two separatrices of
the pole at infinity coalesce. There are four types of connections depending
on the quadrant where they occur: we note them $(H_j)$, $j=1, \dots,
4$: see Figures~\ref{eps_zero}(a) and \ref{4_hom}. 
\begin{figure}\begin{center}
\subfigure[$(H_1)$] {\includegraphics[width=3cm]{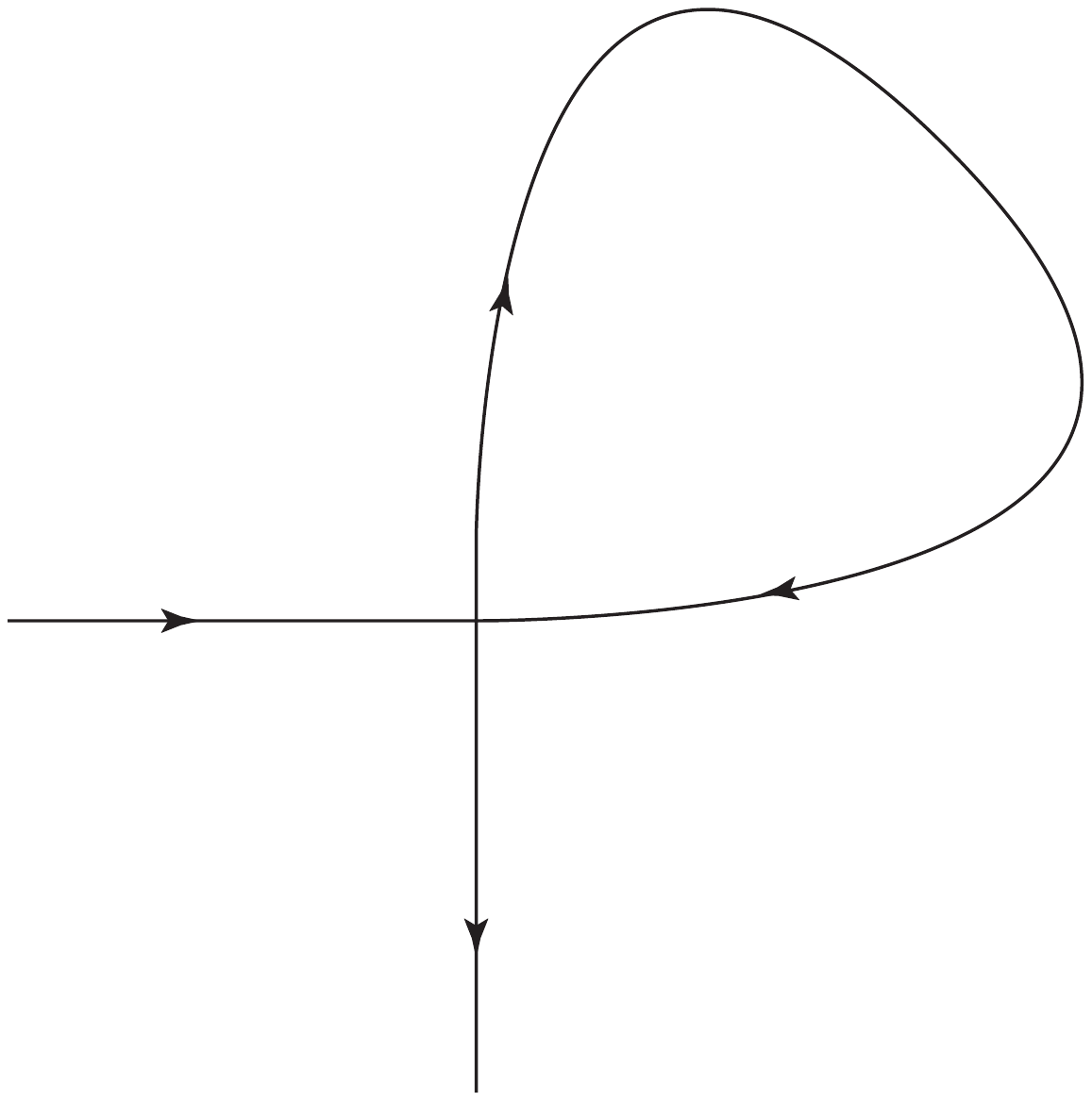}}\qquad\subfigure[$(H_2)$] {\includegraphics[width=3cm]{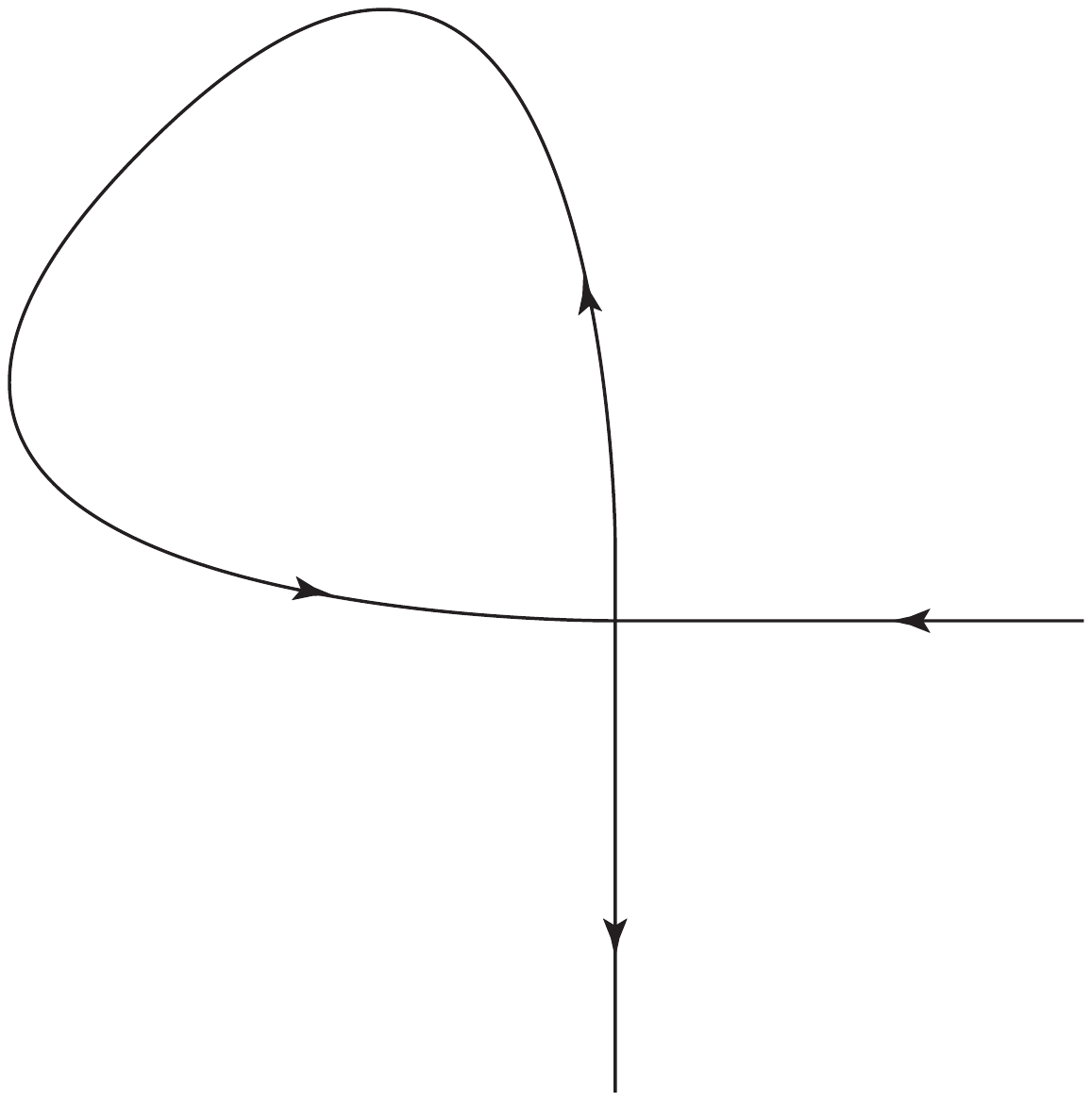}}\qquad\subfigure[$(H_3)$] {\includegraphics[width=3cm]{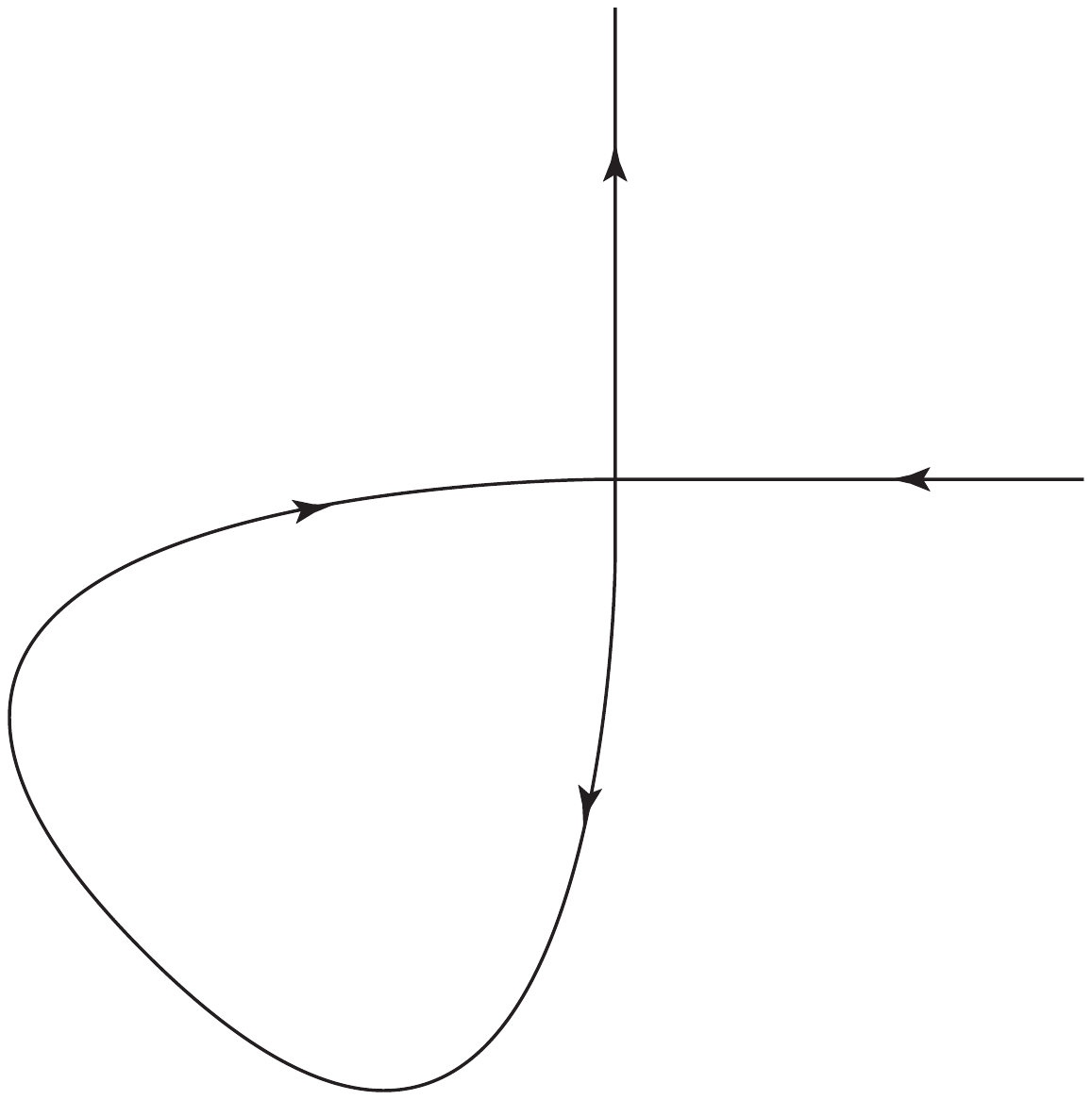}}\qquad\subfigure[$(H_4)$] {\includegraphics[width=3cm]{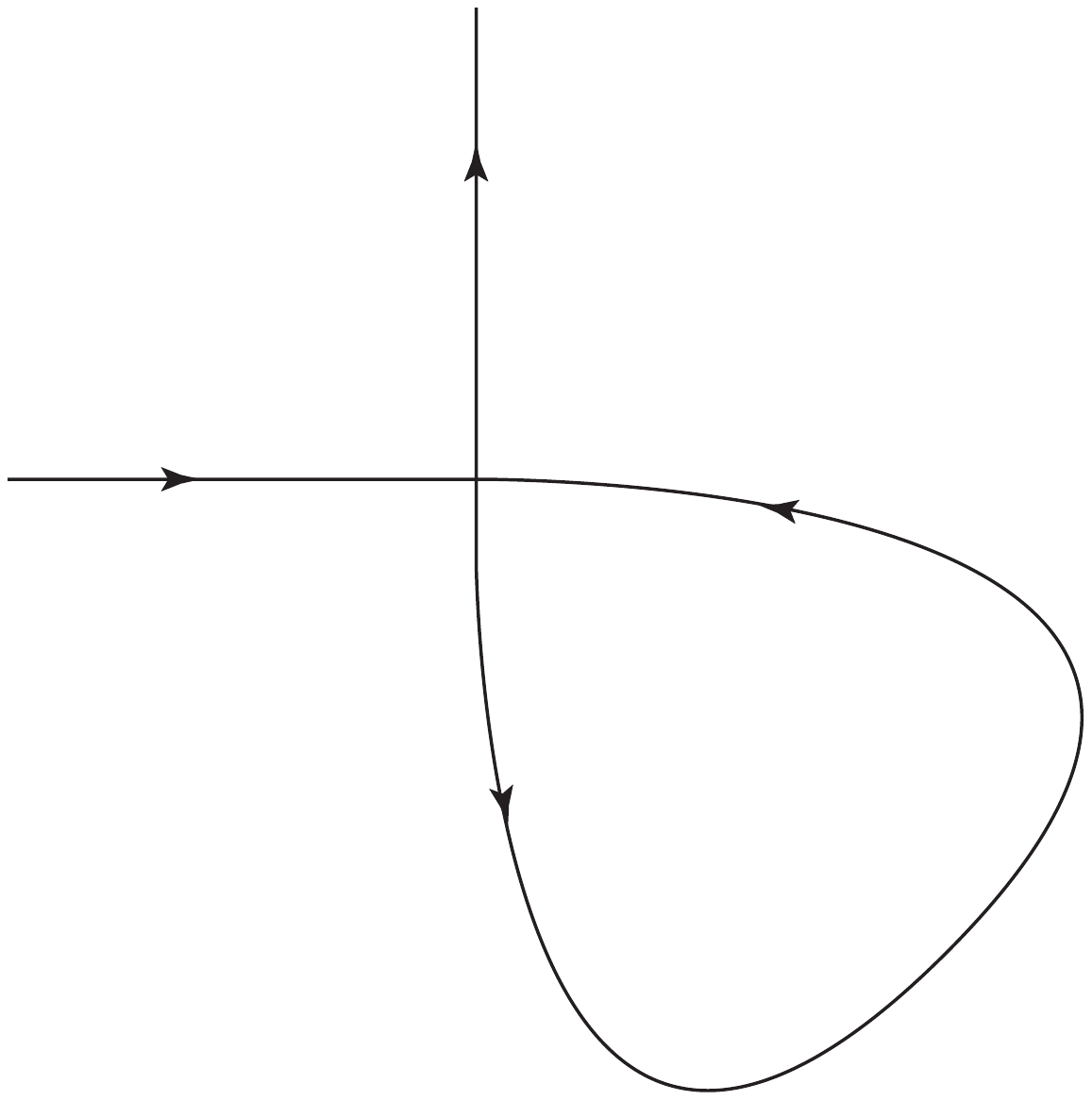}}
\caption{The four homoclinic loops in the four quadrants in $\CP^1$ inside a disk around $\infty$.} \label{4_hom}\end{center}\end{figure}

Each time a homoclinic loop occurs, it surrounds a singular
point with a pure imaginary eigenvalue. Each connection corresponds
to a bifurcation of real codimension $1$. Because there are four
separatrices, there can be up to two simultaneous homoclinic loops  forming a figure-eight loop: this will be a bifurcation of real codimension $2$.
Considering that the system is invariant under
\begin{equation}(z,\eps_1,\eps_0,t)\mapsto(iz,-\eps_1, -i\eps_0, -t),\label{rotation4}\end{equation} the
study of one  bifurcation surface $(H_j)$ allows determining  the
others.
These bifurcations are represented by surfaces in $\S^3$. The equation of the bifurcation locus is implicit, and hence not easy to visualize. It is of the form $P_\eps'(z_j)\in i\R^*$, with boundary points when $P_\eps'(z_j)=0$.

\subsection{Bifurcation of parabolic points.} This occurs when two
singular points coallesce in a double singular point (parabolic point), namely when the discriminant $\Delta$
vanishes, where
$$\Delta= -4\eps_1^3-27\eps_0^2.$$ Note that this local bifurcation is of
complex codimension $1$, hence real codimension $2$, but it is represented by a curve on $\S^3$ because of the conic structure. 

A generic bifurcation of double singular point  occurs when the third singular point is not a center, i.e. its eigenvalue is not on the imaginary axis.

\begin{figure}\begin{center}
\subfigure[Codimension 2] {\includegraphics[width=5cm]{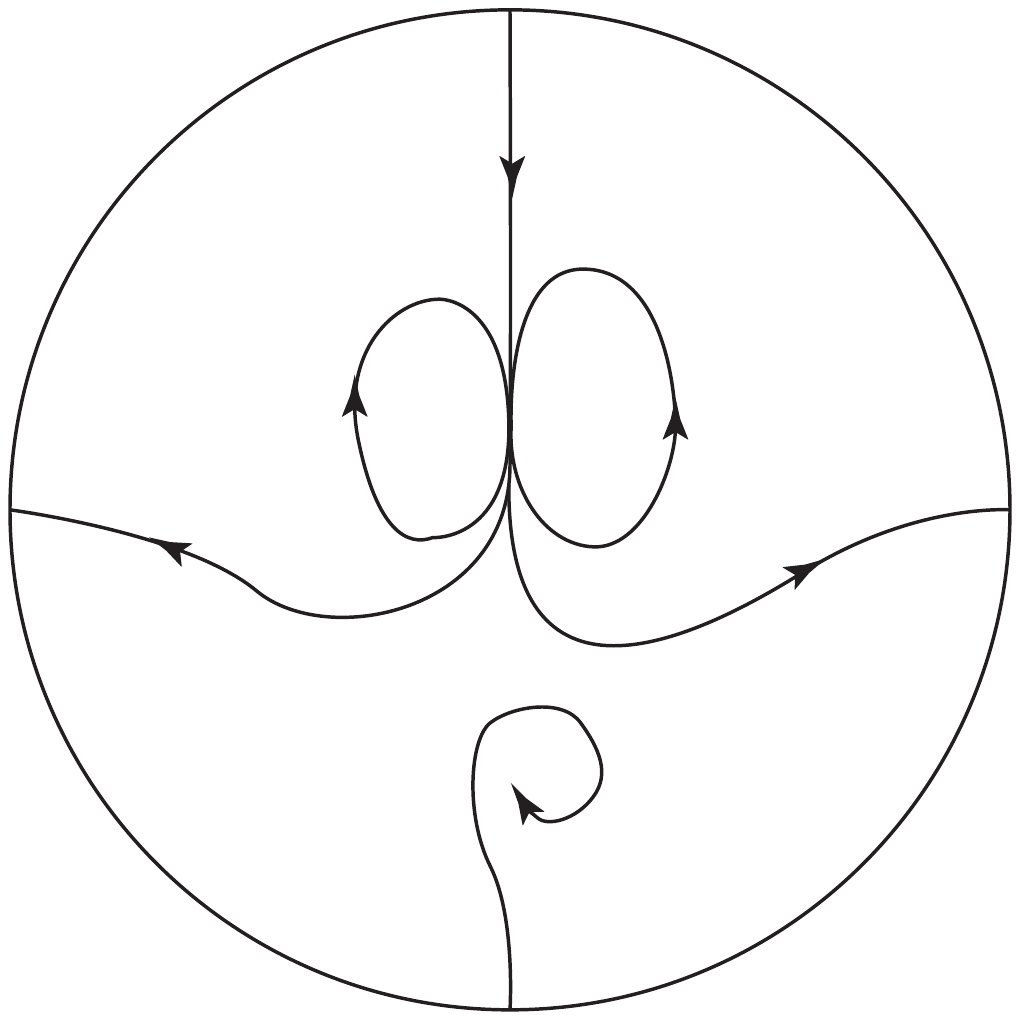}}\qquad\qquad\subfigure[Codimension 3] {\includegraphics[width=5cm]{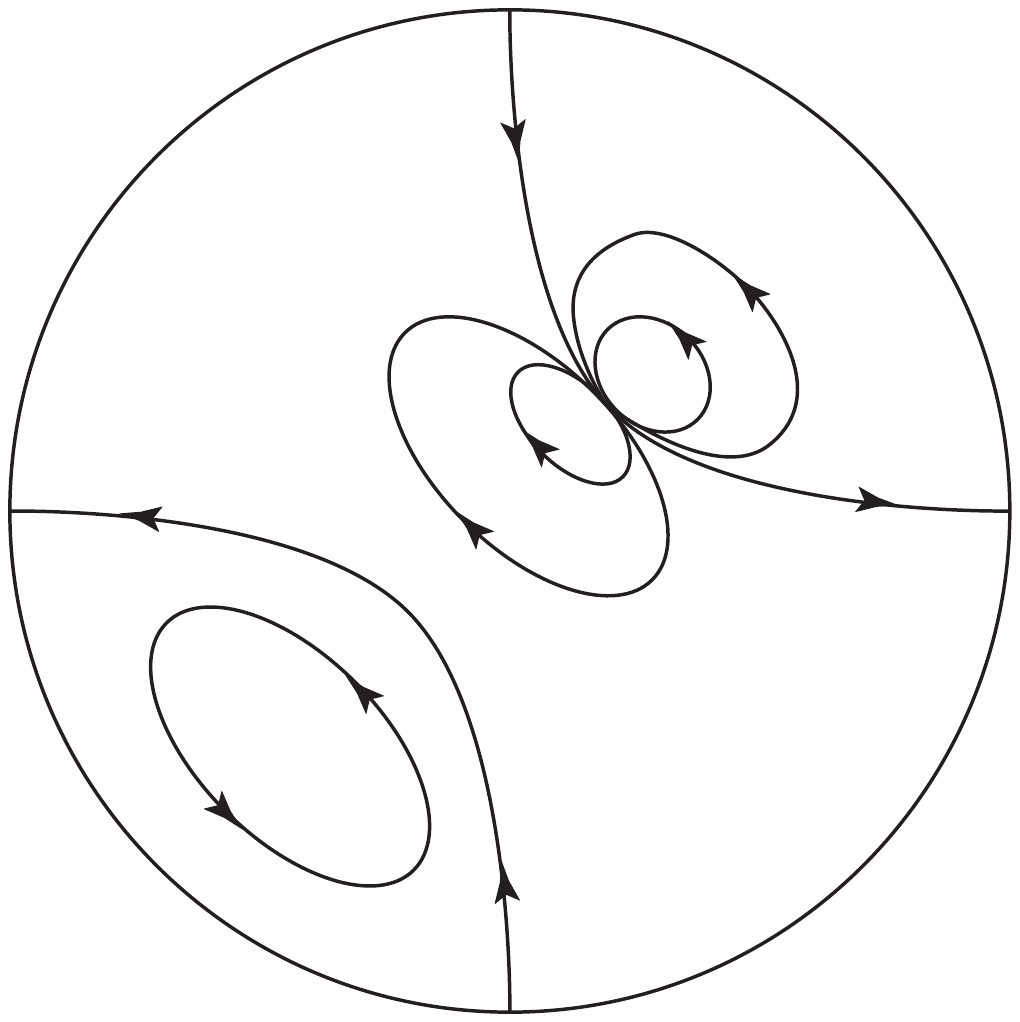}}
\caption{The two situations for parabolic point: (a) corresponds to a regular point of $\Delta=0$, with two heteroclinic connections through the parabolic point (here $(H_1)$ and $(H_2)$), and (b) to a point of codimension 3 with one heteroclinic connection through the parabolic point  (here $(H_1)$), and an additional homoclinic connection (here $(H_3)$).} \label{fig_parabolic}\end{center}\end{figure}

\begin{proposition}  
\begin{enumerate} \item In $\S^3$, $\Delta=0$ can be represented as a $2:3$ torus knot on the torus $4|\eps_1|^3=27|\eps_0|^2$ (i.e. a close curve turning twice (resp. thrice) around $0$ in $\eps_0$ (resp. $\eps_1$)). Four points of this knot are not generic, i.e. correspond to higher order bifurcations. These four points divide $\Delta=0$ in four arcs of regular (generic) points.
\item The bifurcation diagram at a regular point of $\Delta=0$ (when the third singular point is not a center) is given in Figure~\ref{Delta_regular}.
In particular, two surfaces of homoclinic bifurcations in adjacent quadrants end along the regular arcs of $\Delta=0$ (see Figure~\ref{fig_parabolic}(a)).
\end{enumerate}\end{proposition}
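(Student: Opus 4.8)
The plan is to analyze the discriminant curve $\Delta = -4\eps_1^3 - 27\eps_0^2 = 0$ directly, first establishing its topological type as a torus knot, and then studying the local bifurcation structure near its regular points.

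\medskip

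\textbf{Part (1): the torus knot structure.} First I would note that on $\Delta = 0$ we have $4\eps_1^3 = -27\eps_0^2$, so in particular $4|\eps_1|^3 = 27|\eps_0|^2$, placing the curve on the stated torus. Since we work modulo the conic equivalence \eqref{simeq}, I would pass to a convenient normalization, e.g. fix $|\eps_1| = \mathrm{Cst}$ (which the excerpt explicitly permits) so that $|\eps_0|$ is then determined. Writing $\eps_1 = r_1 e^{i\theta_1}$ and $\eps_0 = r_0 e^{i\theta_0}$, the equation $4\eps_1^3 = -27\eps_0^2$ forces a relation $3\theta_1 \equiv 2\theta_0 + \pi \pmod{2\pi}$ between the arguments once the moduli are fixed. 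The plan is to parametrize the solution set of this argument relation: as $\eps$ traces the knot once, the argument of $\eps_0$ must wind so that $2\theta_0$ covers the same angular change as $3\theta_1$, which is exactly the statement that the curve winds twice in the $\eps_0$-direction and three times in the $\eps_1$-direction, i.e. a $(2,3)$ torus knot. I would verify closure and connectedness by exhibiting an explicit parametrization, e.g. $\eps_1 = c\, e^{2is}$, $\eps_0 = c'\, e^{3is}$ for $s \in [0, 2\pi)$ with constants chosen to satisfy $4\eps_1^3 = -27\eps_0^2$, and checking the period is $2\pi$.

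\medskip

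\textbf{Locating the four non-generic points.} By the definition of a generic double point given just above the proposition, a point of $\Delta = 0$ fails to be generic precisely when the third (surviving) singular point is a center, i.e. when its eigenvalue $P_\eps'(z_3)$ lies on the imaginary axis. At a double root the discriminant condition gives a repeated root $z_1 = z_2$ and a simple root $z_3$ with $2z_1 + z_3 = 0$ (from $\sum z_j = 0$, since $P_\eps$ has no quadratic term). I would compute $P_\eps'(z_3) = 3z_3^2 + \eps_1$ in terms of the double-root location and impose $P_\eps'(z_3) \in i\R$. Combined with the parametrization from Part (1), this should reduce to a single real trigonometric condition on the parameter $s$, and I would count its solutions on the knot, expecting exactly four. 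These four points then cut the knot into four arcs of generic (regular) points, proving the rest of (1).

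\medskip

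\textbf{Part (2): the local bifurcation diagram.} At a regular point of $\Delta = 0$ with a non-center third singular point, the local picture is governed by the parabolic (double) point together with the nearby homoclinic surfaces. Using Theorem 2.1(4)-(5), a separatrix of $\infty$ must end either at a finite singular point or in a homoclinic loop; at the parabolic point two separatrices land on the double point, giving the two heteroclinic connections $(H_1)$ and $(H_2)$ in adjacent quadrants depicted in Figure~\ref{fig_parabolic}(a). The plan is to show that as one unfolds $\Delta$ transversally — splitting the double singular point into two simple ones — these heteroclinic connections become the boundary loci of the two homoclinic-bifurcation surfaces: the condition $P_\eps'(z_j) \in i\R^*$ defining $(H_j)$ degenerates exactly to $P_\eps'(z_j) = 0$ at $\Delta = 0$, which is the stated boundary relation. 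I would make this precise by a local analysis near the double point, using the travel-time / residue description (Theorem 2.1(4)) to track which homoclinic surfaces are adjacent, and invoking the symmetry \eqref{rotation4} to reduce to a single representative arc. \emph{The main obstacle} I anticipate is the spatial organization: correctly identifying \emph{which} two quadrants' homoclinic surfaces meet at a given arc, and verifying the transversality and adjacency claims rather than merely the incidence $P_\eps'(z_j) = 0$ — this requires a careful local unfolding of the parabolic point and matching of separatrix landings, which is precisely the nontrivial three-dimensional bookkeeping the introduction warns about.
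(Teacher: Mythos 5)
Your Part (1) is correct and is essentially the paper's own argument: the paper parametrizes $\Delta=0$ by the position of the double root, writing $P_{\eps'}(z)=(z-a)^2(z+2a)$, so that $\eps_1=-3a^2$ and $\eps_0=2a^3$, which is exactly your parametrization $\eps_1=c\,e^{2is}$, $\eps_0=c'\,e^{3is}$; and the four non-generic points are found precisely as you propose, by imposing that the eigenvalue of the third singular point, $P_{\eps'}'(-2a)=9a^2$, lie on $i\R$, i.e. $a\in(1\pm i)\R$, which in the chart $|\eps_1|=1$ gives the four points $a=\pm\frac{1\pm i}{\sqrt{3}}$, permuted by the symmetry \eqref{rotation4}.

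Part (2), however, has a genuine gap, and it sits exactly where you flag ``the main obstacle'': your plan stops at the point where the paper's proof does its real work. The paper introduces coordinates adapted to the parabolic point by writing the unfolding as $P_\eps(z)=((z-a-\eta_1)^2-\eta_2)(z+2(a+\eta_1))$ and checking that $(\eps_1,\eps_0)\mapsto(\eta_1,\eta_2)$ is an analytic diffeomorphism near $\eps'$ when $a\neq0$ (so that $\Delta=0$ becomes $\{\eta_2=0\}$). It then computes the eigenvalues at the two colliding singular points,
$$P_\eps'(a+\eta_1\pm\sqrt{\eta_2})=\pm2\sqrt{\eta_2}\left(3(a+\eta_1)\pm\sqrt{\eta_2}\right),$$
and writes the two homoclinic conditions in the variable $\sqrt{\eta_2}=x+iy$, with $3(a+\eta_1)=\alpha+i\beta$: they are the two hyperbolas $\pm(x^2-y^2)+\alpha x-\beta y=0$. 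The genericity hypothesis (third point not a center, i.e. $9a^2\notin i\R$, i.e. $\alpha\neq\pm\beta$) guarantees that these are two distinct hyperbolas with a quadratic tangency at the origin of the $\sqrt{\eta_2}$-plane; passing to the $\eta_2$-plane, one gets two half-surfaces $(H_j)$, $(H_{j+1})$ which end on $\Delta=0$ and merge there tangentially (the paper asserts the tangency is of order $\frac32$). This explicit computation is what proves both that the homoclinic surfaces terminate on the regular arcs and how they meet --- neither of which follows from the mere incidence $P_\eps'(z_j)=0$ that your outline invokes. Note also that your anticipated conclusion is off on one point: you speak of verifying ``transversality,'' but the two half-surfaces are \emph{tangent} along $\Delta=0$, not transversal; this folding of the surfaces is precisely what makes the global diagram delicate, as the paper emphasizes. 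Finally, the identification of \emph{which} surfaces meet (adjacent quadrants, $j$ and $j+1$) is read off the phase portrait of the heteroclinic connections through the parabolic point (Figure~\ref{fig_parabolic}(a)); your proposal correctly recognizes that this must be settled but offers no mechanism for doing so.
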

\begin{figure}\begin{center}
\includegraphics[width=8 cm]{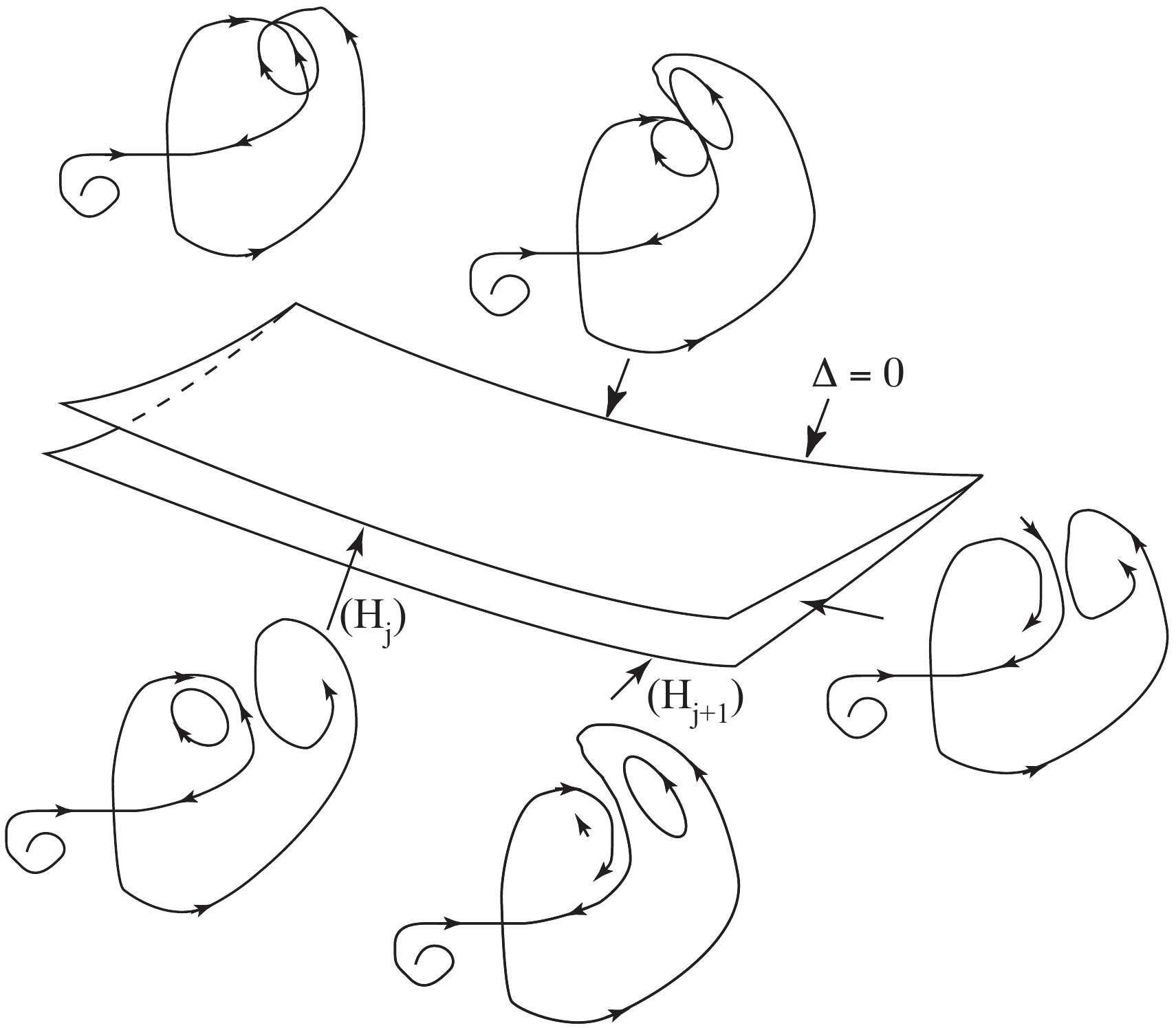} \caption{Bifurcation diagram near a generic point of $\Delta=0$.}
  \label{Delta_regular}\end{center}
\end{figure}
\begin{proof}
When  $\Delta=0$, then $|\eps_0|= \sqrt{\frac{4|\eps_1|^3}{27}}$. Hence, instead of cutting the bifurcation diagram by a sphere $\pl \eps\pl =\mathrm{Cst}$,  we prefer to use a chart $|\eps_1|=1$ (i.e. $\eps_1\in \S^1$). At a
point of $\Delta=0$ corresponding to a particular $\eps'$,
$P_{\eps'}(z)$ can be written as
$$P_{\eps'}(z)=(z-a)^2(z+2a),$$
and a general unfolding for $\eps$ close to $\eps'$ has the form
$$P_\eps(z)=((z-a-\eta_1)^2-\eta_2)(z+2(a+\eta_1)).$$
It is easily checked that the change $(\eps_1,\eps_0)\mapsto
(\eta_1,\eta_2)$ is an analytic diffeomorphism in the neighborhood
of $\eps'$ when $a\neq0$. Note that $P_{\eps'}'(-2a)=9a^2$. Hence,
$P_{\eps'}'(-2a)\in i\R$ if and only if $a\in (1\pm i)\R$. Since
$\eps_1=-3a^2$, when we suppose $|\eps_1|=1$, this yields four
points on the torus knot: $a=\pm\frac{1\pm i}{\sqrt{3}}$, symmetric to each other under \eqref{rotation4}. These four points in parameter space have codimension
$3$, since the third singular point is a center surrounded by a homoclinic loop. They cut $\Delta=0$ into four regions, which are sent one to
the other under \eqref{rotation4}. At points of $\Delta=0$, the
homoclinic connections become heteroclinic connections through the parabolic point in phase space (Figure~\ref{fig_parabolic}). At a
regular point  of $\Delta$,
there are two homoclinic connections in adjacent
sectors (Figure~\ref{fig_parabolic}(a)). 
 Near such a point, two half-surfaces $(H_j)$ and $(H_{j+1})$ merge on $\Delta=0$
(indices are $(\text{mod} \; 4)$). These half-surfaces are tangent
on $\Delta=0$. Indeed, they correspond to 
$$P_\eps'(a+\eta_1\pm \sqrt{\eta_2}) = \pm2\sqrt{\eta_2}\left(3(a+\eta_1)\pm \sqrt{\eta_2}\right)\in i\R.$$
Let us put $\sqrt{\eta_2}=x+iy$ and let $3(a+\eta_1)=\alpha+i\beta$. Since, by hypothesis, $P_\eps'(-2a)\notin i\R$ for $\eta_1=\eta_2=0$, then $\alpha\pm \beta\neq0$ for small $\eta_1,\eta_2$.
We have that $\mathrm{Re}(P_\eps'(a+\eta_1\pm \sqrt{\eta_2}) )=0$ if and only if $\pm(x^2-y^2)+\alpha x-\beta y=0$. Because $\alpha\neq\pm\beta$, these two hyperbolas are distinct and have a quadratic tangency at the origin in the $(x,y)$-space, i.e. in the $\sqrt{\eta_2}$-space. Then the corresponding curves in $\eta_2$-space are also tangent. A finer analysis would show that the tangency if of order $\frac32$.
\end{proof}

\subsection{The other bifurcations}

Apart from the codimension $4$ bifurcation at $\eps=0$, there remains two bifurcations, one  of codimension $2$, and one of codimension $3$. The codimension $2$ bifurcation is the figure-eight loop,  i.e. the intersection of $(H_j)$ and
$(H_{j+2})$. It occurs when two (and hence
three) eigenvalues are pure imaginary, i.e. all singular points are centers. 

\begin{proposition} The intersection of the codimension $2$ bifurcation $(H_j)\cap (H_{j+2})$ (denoted $(H_{j,j+2})$) with   $|\eps_1|=1$, occurs along the two line
segments $$J_\pm=\left\{(\eps_1,\eps_0)\; : \;\eps_1=\pm i, \eps_0\in(1\pm
i)\left[-\sqrt{\frac2{27}},\sqrt{\frac2{27}}\right]\right\},$$ with
endpoints on $\Delta=0$. The bifurcation diagram is given in
Figure~\ref{figureeight}.
\begin{figure}\begin{center}
\includegraphics[
  width=10 cm]{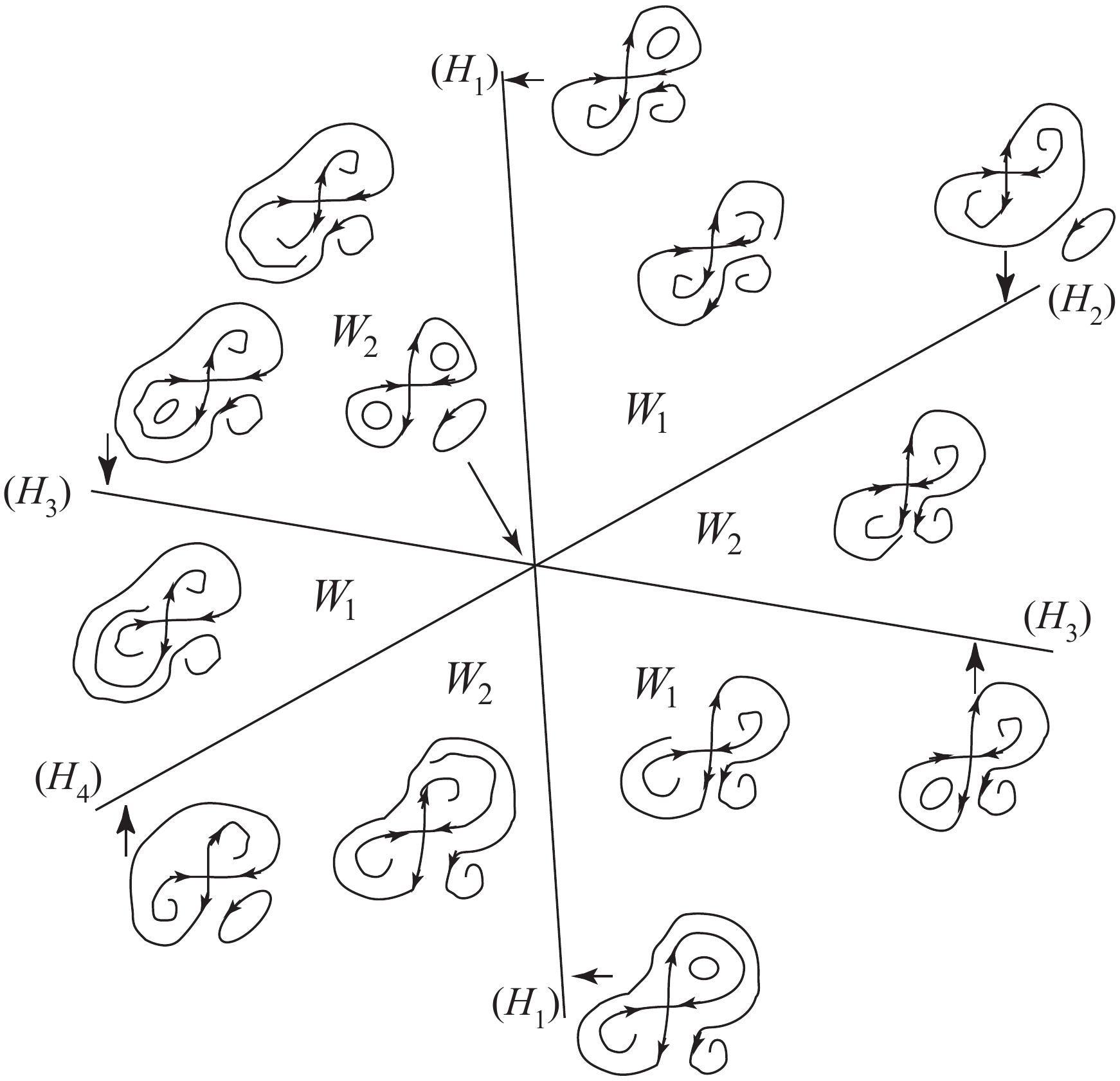} \caption{Transversal cut of the bifurcation diagram for the figure eight loop. The point of saddle type is the pole at $\infty$. The phase portraits represent the sphere minus a finite point. The two open sets $W_1$ and $W_2$ of generic values of parameters are highlighted: from this, it is not obvious to see that each $W_j$ is simply connected! }
  \label{figureeight}\end{center}
\end{figure}
\end{proposition}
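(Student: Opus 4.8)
The plan is to convert the geometric figure-eight condition into an algebraic condition on the three eigenvalues and then solve it explicitly. First I would argue that $(H_{j,j+2})$ is exactly the locus on which all three singular points are centers. Indeed, a figure-eight loop is the superposition of two homoclinic loops, each enclosing a single singular point, so by part (4) of the Theorem (the case $\#I_\ell=1$) two of the eigenvalues $\lambda_j=P_\eps'(z_j)=3z_j^2+\eps_1$ lie in $i\R^*$; since $\sum_{j=1}^3 1/P_\eps'(z_j)=0$, the third reciprocal, and hence $\lambda_3$, is automatically pure imaginary. Conversely, if all three points are centers, applying part (4) to the singletons $I_1=\{z_1\}$ and $I_1=\{z_2\}$ produces two homoclinic loops, which together form the figure-eight. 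So the task reduces to describing $\{\eps : \lambda_1,\lambda_2,\lambda_3\in i\R^*\}\cap\{|\eps_1|=1\}$.

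Next, using Vieta's relations $z_1+z_2+z_3=0$, $\sum_{i<j}z_iz_j=\eps_1$, $z_1z_2z_3=-\eps_0$, I get $\sum z_j^2=-2\eps_1$ and therefore $\lambda_1+\lambda_2+\lambda_3=3\sum z_j^2+3\eps_1=-3\eps_1$. If all $\lambda_j\in i\R$ then $-3\eps_1\in i\R$, so on $|\eps_1|=1$ necessarily $\eps_1=\pm i$. I would fix $\eps_1=i$ and recover $\eps_1=-i$ at the end from the symmetry \eqref{rotation4}. For $\eps_1=i$ the center condition $\lambda_j=3z_j^2+i\in i\R$ is simply $\mathrm{Re}(z_j^2)=0$, i.e. each root lies on one of the two lines $(1+i)\R$ or $(1-i)\R$.

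Then comes the case analysis over the distribution of the roots on these two lines, writing each root as a real multiple of $1\pm i$ and separating real and imaginary parts in $\sum z_j=0$ and $\sum_{i<j}z_iz_j=i$. The ``all on $(1+i)\R$'' configuration forces $\sum_{i<j}s_is_j=\tfrac12>0$, contradicting $\sum_{i<j}s_is_j=-\tfrac12\sum s_j^2\le0$; the two-one splits force a root at the origin, hence $\eps_0=0$, which already belongs to the next family (the origin lies on both lines). This leaves all three roots on $(1-i)\R$: $z_j=(1-i)s_j$ with $s_j\in\R$, $\sum s_j=0$, and $\sum s_j^2=1$. The product gives $\eps_0=-z_1z_2z_3=2(1+i)s_1s_2s_3$, so $\eps_0\in(1+i)\R$. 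To get the interval I maximize and minimize $s_1s_2s_3$ subject to $\sum s_j=0$, $\sum s_j^2=1$; the extrema occur when two of the $s_j$ coincide, giving $s_1s_2s_3=\pm\tfrac1{3\sqrt6}$, hence $\eps_0/(1+i)=2s_1s_2s_3$ ranges over $[-\sqrt{2/27},\sqrt{2/27}]$. At the extrema two roots merge, and a direct check ($-4i^3-27(1+i)^2\cdot\tfrac2{27}=4i-4i=0$) confirms the endpoints lie on $\Delta=0$ (they are the codimension-$3$ points of the previous proposition). This is $J_+$; applying \eqref{rotation4} yields $J_-$.

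Finally, for the bifurcation diagram of Figure~\ref{figureeight} I would record the transversal picture along $J_\pm$: the two surfaces $(H_j)$ and $(H_{j+2})$ cross there and each terminates on $\Delta=0$ at an endpoint of the segment, and I would identify the two open generic regions $W_1,W_2$ through the Douady--Sentenac connecting graph. I expect the main obstacle to be the first step rather than the algebra: correctly reading off from part (4) of the Theorem that the figure-eight is \emph{equivalent} to all three points being centers, including the converse that the center condition genuinely produces two loops in a figure-eight configuration (and not some other connection), together with the global claim encoded in the figure that $W_1$ and $W_2$ are simply connected, which is a topological statement not delivered by the local computation.
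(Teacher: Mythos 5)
Your computation of the locus $J_\pm$ is correct, and it follows a genuinely different route from the paper's. The paper writes $P_{\eps'}(z)=(z-a)(z+b)(z+a-b)$ and uses the linear combinations $4P_{\eps'}'(a)+P_{\eps'}'(-b)+P_{\eps'}'(b-a)=9a^2$ and $P_{\eps'}'(a)+4P_{\eps'}'(-b)+P_{\eps'}'(b-a)=9b^2$ to conclude $a,b\in(1\pm i)\R$, and then stops there as far as the location of the locus is concerned. You instead pin down $\eps_1=\pm i$ from the trace identity $\lambda_1+\lambda_2+\lambda_3=-3\eps_1\in i\R$, place the roots on the lines $\mathrm{Re}(z^2)=0$, and run the case analysis. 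Your route actually delivers more of the stated claim than the paper's own proof does: the constrained optimization of $s_1s_2s_3$ producing the interval $\left[-\sqrt{2/27},\sqrt{2/27}\right]$, and the check that the endpoints satisfy $\Delta=0$, are left implicit in the paper. That part of your proposal is sound.

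There is, however, a genuine gap, and it is not where you locate the main difficulty. The proposition also asserts that the bifurcation diagram is the one of Figure~\ref{figureeight}, and the bulk of the paper's proof is devoted to exactly that: proving that the homoclinic surfaces cross \emph{transversally} along $(H_{j,j+2})$. The paper takes the unfolding $P_\eps(z)=(z-a-\eta_1)(z+b+\eta_2)(z+a-b+\eta_1-\eta_2)$, writes $\eta_k=\delta_k(1+i)+\nu_k(1-i)$, expands $\mathrm{Re}\bigl(P_\eps'\bigr)$ at the three perturbed singular points to first order in $(\nu_1,\nu_2)$, and shows that the three surfaces of homoclinic bifurcation are pairwise transversal exactly when $(2a-b)(a-2b)(a+b)\neq0$, i.e. condition \eqref{transversal}. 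This computation is what legitimizes drawing Figure~\ref{figureeight} as two surfaces in general position, and its failure at the endpoints of $J_\pm$ is what produces the tangencies in the codimension-$3$ diagram (Figure~\ref{cod3}). You only \lq\lq record\rq\rq\ that the surfaces cross; without the transversality argument the intersection could a priori be tangential and the local picture different. Two smaller points: your forward implication assumes each loop of the figure-eight encloses exactly one singular point, and ruling out the distribution where one loop encloses two points needs an argument (though the paper also takes this for granted, stating the equivalence before the proposition); and the simple-connectedness of $W_1,W_2$ that you flag as an obstacle is not part of this proposition's burden at all --- it is Theorem~\ref{thm:DS} of Douady--Sentenac, invoked separately later in the paper.
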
\begin{proof} We
can suppose that for some $\eps'$
$$P_{\eps'}(z) = (z-a)(z+b)(z+a-b). $$ Hence $P_{\eps'}'(a)=
(a+b)(2a-b)$, $P_{\eps'}'(-b)= -(a+b)(a-2b)$, and $P_{\eps'}'(b-a)=
(b-2a)(2b-a)$. Since the sum of the inverses of these derivatives
vanishes, as soon as two of them  belong to $i\R$, the third belongs
to $i\R$. Moreover, $4P_{\eps'}'(a)+ P_{\eps'}'(-b) + P_{\eps'}'(b-a)= 9a^2\in i\R$, and $P_{\eps'}'(a)+ 4P_{\eps'}'(-b) + P_{\eps'}'(b-a)= 9b^2\in i\R$, yielding $a,b \in (1\pm i)\R$. Using
the symmetry \eqref{rotation4}, we can suppose $a=\alpha(1+i)$,
$b=\beta(1+i)$, $\alpha,\beta>0$. It is easily checked that when the
singular points are distinct, then the unfolding for $\eps$ near
$\eps'$ has the form $$P_{\eps}(z) =
(z-a-\eta_1)(z+b+\eta_2)(z+a-b+\eta_1-\eta_2).
$$ Indeed, the change $(\eta_1,\eta_2)\mapsto (\eps_1,\eps_0)$ is
invertible when  $(2a-b)(a-2b)(a+b)\neq0$.

There are surfaces of homoclinic loop bifurcations corresponding to
the three conditions
\begin{itemize}
\item $P_\eps'(a+\eta_1)\in i\R$;
\item $P_\eps'(-b-\eta_2)\in i\R$;
\item $P_\eps'(b-a-\eta_1+\eta_2)\in i\R$.
\end{itemize}
We want to show that these surfaces are transversal when the
singular points are distinct. For this, we let
$$\begin{cases}
\eta_1=\delta_1(1+i)+\nu_1(1-i),\\
\eta_2=\delta_2(1+i)+\nu_2(1-i).
\end{cases}$$
Then \begin{align*} {\rm
Re}(P_\eps'(a+\eta_1))&=2[\nu_1(4\alpha+\beta)+
\nu_2(\alpha-2\beta)]+O(|(\eta_1,\eta_2)|^2),\\
{\rm Re}(P_\eps'(-b-\eta_2))&=2[\nu_1(-2\alpha+\beta)+
\nu_2(\alpha+4\beta)]+O(|(\eta_1,\eta_2)|^2),\\
{\rm Re}(P_\eps'(-a+b-\eta_1+\eta_2))&=2[\nu_1(4\alpha-5\beta)+
\nu_2(-5\alpha+4\beta)]+O(|(\eta_1,\eta_2)|^2),
\end{align*}
Two of these surfaces are trasnversal if the corresponding $2\times 2$ Jacobian with respect to  $(\nu_1,\nu_2)$ does not vanish. The three surfaces are two by two transversal when
\begin{equation}(2a-b)(a-2b)(a+b)\neq0.\label{transversal} \end{equation}\end{proof}

The last bifurcation occurs for non regular points of $\Delta=0$. At these points, the system has a parabolic point and a center: see Figure~\ref{fig_parabolic}(b). 

\begin{proposition}
The bifurcation diagram for a codimension $3$ point as in Figure~\ref{fig_parabolic}(b) is given in
Figure~\ref{cod3}.
\begin{figure}\begin{center}
\includegraphics[width=10 cm]{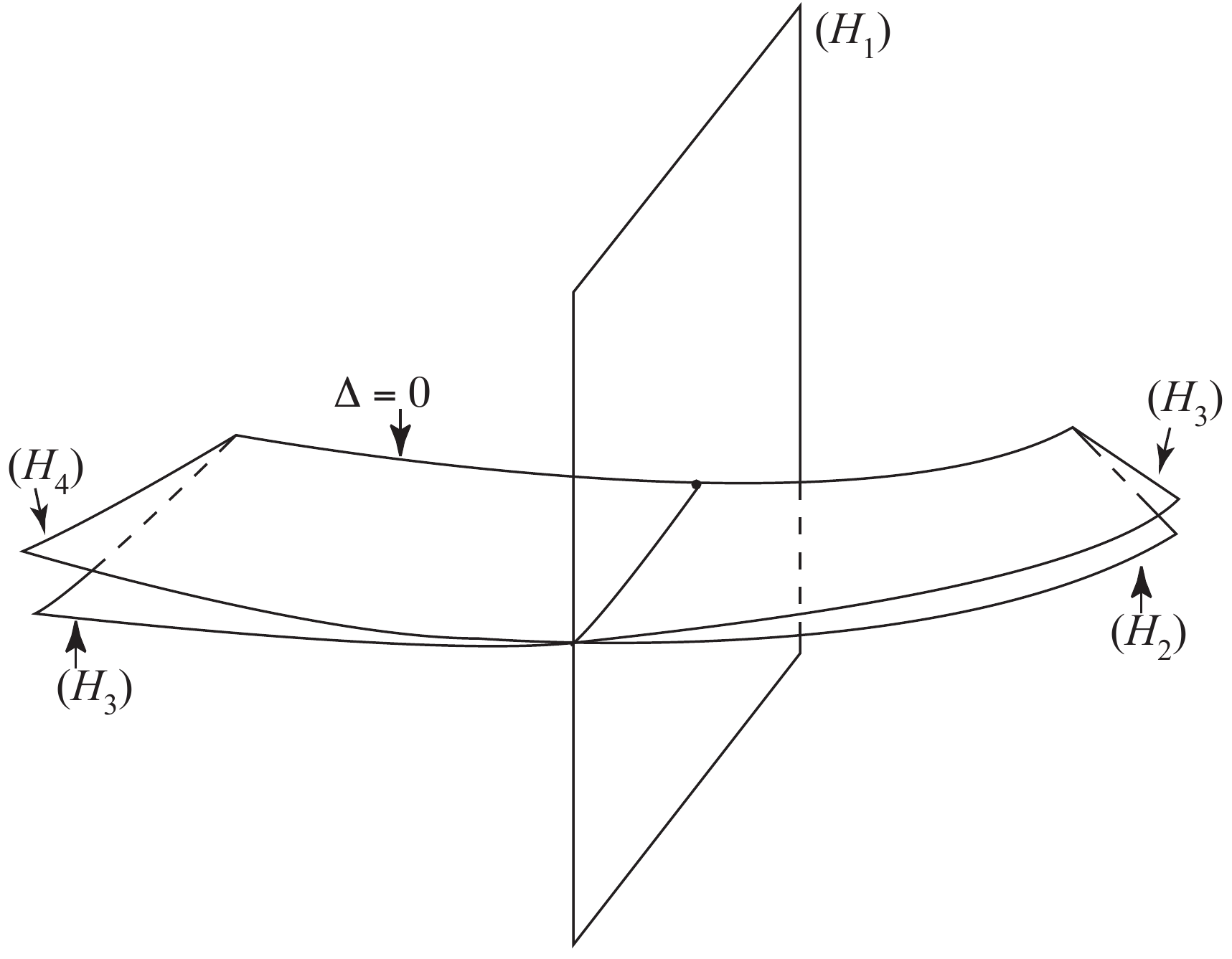} \caption{Bifurcation diagram for a codimension $3$ point.}
  \label{cod3}\end{center}
\end{figure}
\end{proposition}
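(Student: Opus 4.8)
The plan is to reduce everything, via the symmetry \eqref{rotation4}, to a single codimension $3$ point and to read off all the bifurcation loci in the local unfolding coordinates already introduced for the regular parabolic case. By \eqref{rotation4} it suffices to treat the point with $a=\frac{1+i}{\sqrt{6}}$ (normalized so that $|\eps_1|=1$), for which $P_{\eps'}(z)=(z-a)^2(z+2a)$ has a double point at $a$ and a third point at $-2a$ that is a center, since $P_{\eps'}'(-2a)=9a^2=6i\in i\R^*$. As before, a general unfolding has the form $P_\eps(z)=((z-a-\eta_1)^2-\eta_2)(z+2(a+\eta_1))$, and $(\eps_1,\eps_0)\mapsto(\eta_1,\eta_2)$ is an analytic diffeomorphism near $\eps'$ because $a\neq0$. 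I would work in the chart $|\eps_1|=1$, a $3$-dimensional slice of $\S^3$, so that all strata have the dimensions recorded above.

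Next I would write down the four loci in these coordinates. A direct computation gives $P_\eps'(a+\eta_1\pm\sqrt{\eta_2})=\pm2\sqrt{\eta_2}(3(a+\eta_1)\pm\sqrt{\eta_2})$ and $P_\eps'(-2(a+\eta_1))=9(a+\eta_1)^2-\eta_2$. Hence $\Delta=0$ is $\{\eta_2=0\}$; the two homoclinic surfaces $(H_j),(H_{j+1})$ near the double point are $\mathrm{Re}(\pm2\sqrt{\eta_2}(3(a+\eta_1)\pm\sqrt{\eta_2}))=0$; and the homoclinic surface $(H_{j+2})$ surrounding the center is $\mathrm{Re}(9(a+\eta_1)^2-\eta_2)=0$. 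Since the $\eta_1$-derivative of $9(a+\eta_1)^2$ at $\eta_1=0$ equals $18a\neq0$, the surface $(H_{j+2})$ is smooth, and I would check that it is transversal to the curve $\Delta=0$, meeting it precisely at the codimension $3$ point.

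The heart of the argument is the confluence, which is governed by the condition $\alpha=\pm\beta$, where $3(a+\eta_1)=\alpha+i\beta$. In the regular proposition this was excluded; here it holds at $\eta_1=0$ precisely because $9a^2\in i\R^*$. Writing $\sqrt{\eta_2}=x+iy$, the two homoclinic conditions again read $\pm(x^2-y^2)+\alpha x-\beta y=0$, but at $\eta_1=0$ (so $\alpha=\beta$) they now factor as $(x-y)(x+y+\alpha)=0$ and $(x-y)(\alpha-x-y)=0$. The common line $x=y$ corresponds to $\eta_2=2ix^2$, and on it one checks that all three eigenvalues are pure imaginary; hence this ray is exactly the figure-eight segment $J$ of the previous proposition issuing from the codimension $3$ point (indeed there $\eps_1=-i(1+2x^2)\in i\R$ and $\eps_0\in(1-i)\R$, matching the description of $J_-$). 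Thus at $\eta=0$ the two sheets $(H_j),(H_{j+1})$, which are tangent of order $\tfrac32$ along the regular arcs of $\Delta=0$, pinch together along $J$, while the transversal sheet $(H_{j+2})$ crosses through. I would then assemble the chambers cut out by these surfaces, assign to each the corresponding phase portrait using the topological and analytic invariants of Douady and Sentenac (see the preliminary theorem and \cite{DS}), and verify compatibility on the boundary with the already-analyzed codimension $2$ strata $\Delta=0$ and $J$, thereby recovering Figure~\ref{cod3}.

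The hard part will be the intricate spatial organization of three non-transversal surfaces meeting at one point: the pair $(H_j),(H_{j+1})$ are mutually tangent and degenerate along $J$, the sheet $(H_{j+2})$ is only transversal to $\Delta=0$, and the natural parameter $\sqrt{\eta_2}$ is two-valued, so that each homoclinic locus must be tracked as a genuine surface over a double cover of the $\eta_2$-disk. Resolving this — for instance by expanding in $(\eta_1,\sqrt{\eta_2})$, or by blowing up $\eta=0$ to separate the sheets and pin down the exact tangency orders and chamber adjacencies — is where the real effort lies; the remaining steps are routine checks or appeals to the Douady--Sentenac dictionary.
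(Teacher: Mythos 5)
Your computations are correct (apart from the harmless slip $9a^2=3i$, not $6i$, for $a=(1+i)/\sqrt{6}$), and your route is essentially the paper's route made explicit. The paper's own proof is three sentences: the codimension $3$ point is an endpoint of one of the figure-eight segments $J_\pm$ of the preceding proposition (it \lq\lq can be unfolded into an $(H_{j,j+2})$\rq\rq), the only bifurcation surfaces near it are the four homoclinic surfaces merging along the figure-eight curves, and tangencies appear because the transversality condition \eqref{transversal} is violated in the limit. Your factorization of the two loci $\pm(x^2-y^2)+\alpha x-\beta y=0$ into $(x-y)(x+y+\alpha)=0$ and $(x-y)(\alpha-x-y)=0$ when $\alpha=\beta$, together with the check that all three eigenvalues are pure imaginary on the common line $x=y$ and that this ray is carried by the conic structure onto $J_-$, is exactly a computational verification of those assertions: it shows at once why the sheets that were quadratically tangent at regular points of $\Delta=0$ degenerate there and why the figure-eight segment terminates at the point. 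So far this is the same decomposition as the paper's, with the advantage of being explicit in the coordinates $(\eta_1,\sqrt{\eta_2})$.

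There is, however, one genuine structural gap: your local picture contains three homoclinic surfaces --- $(H_j)$, $(H_{j+1})$ around the split points and $(H_{j+2})$ around the center --- whereas Figure~\ref{cod3}, and the paper's proof, involve all four. The conditions $\mathrm{Re}\,P_\eps'(a+\eta_1\pm\sqrt{\eta_2})=0$ only say that a split point is a center; the quadrant occupied by the homoclinic loop surrounding it is a separate topological datum, and it is not constant along these loci. Indeed, the codimension $3$ point is the junction of two \emph{distinct} regular arcs of $\Delta=0$, carrying different pairs of surfaces: in the paper's gluing (proof of the final theorem and Figure~\ref{fig:double}), the arcs $(H_{12}^0)$ and $(H_{23}^0)$ merge at $(H_2^0)$, and correspondingly $(H_{j-1,j}^0)$ and $(H_{j,j+1}^0)$ merge at $(H_j^0)$. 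On one arc the surfaces ending on $\Delta=0$ are $(H_{j-1})$ and $(H_j)$, on the other $(H_j)$ and $(H_{j+1})$; hence the two loci you wrote down carry pieces of \emph{three} quadrant-surfaces $(H_{j-1})$, $(H_j)$, $(H_{j+1})$, and your labelling --- imported from the regular case, where only one arc is present --- loses $(H_{j-1})$ and one of the two arcs. A chamber count based on three surfaces and a single arc would not reproduce Figure~\ref{cod3}, nor the pentagonal boundaries needed later, where all four surfaces have the codimension $3$ point in their closure. Your announced deferral of the chamber-by-chamber assembly is not in itself a defect --- the paper does no more --- but that assembly must start from the correct inventory: four surfaces, two arcs of $\Delta=0$, and one figure-eight curve ending at the point.
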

\begin{proof} This situation corresponds to a non regular point of $\Delta=0$, which is an endpoint of  a segment of double homoclinic curve (figure-eight loop), since it can be unfolded into an $(H_{j,j+2})$. 
The only bifurcation surfaces are the four homoclinic loop surfaces which all merge along the two bifurcations $(H_{j,j+2})$, $j=1,2$. In the limit, some become tangent because the transversality condition \eqref{transversal} is violated at the limit. 
\end{proof}

\section{The global bifurcation diagram}

\subsection{The two open sets $W_1$ and $W_2$ of generic values}\label{sect:open_sets} Before putting together the bifurcation diagram, we must describe the generic situation, which was studied in great detail by Douady and Sentenac \cite{DS}. When $\eps$ is not a bifurcation value, the separatrices of $\infty$ land at singular points.  There are two generic ways in which this can occur: 
\begin{itemize}
\item the two repelling  separatrices land at the same attracting singular point, and the  two other separatrices each land  at a different repelling point: let us call $W_1$ the open set of these parameter values;
\item the two attracting  separatrices land at the same repelling singular point, and the  two other separatrices each land at a different attracting point: let us call $W_2$ this open set.
\end{itemize}
$W_1$ and $W_2$ are  simply connected sets in  $\C^2$. This is a highly nontrivial result, which would be very difficult to prove by \lq\lq classical\rq\rq\ methods, including visualizing the two domains in $\S^3$. Indeed, these domains are limited by the four surfaces $(H_i)$ of homoclinic connections, and the boundary of these surfaces is the union of the curve $\Delta=0$ with the two segments $(H_{i,i+2})$ where double homoclinic connections occur. This means that these surfaces are folded in $\S^3$.  Fortunately, Douady and Sentenac produced a very clever argument allowing to show that $W_1$ and $W_2$ are  simply connected. Indeed, for each $\eps\in W_j$, we can compute the \lq\lq complex time\rq\rq\ to go from $\infty$ to $\infty$ along a loop not cutting the connecting graph (i.e. the union of the separatrices). Up to homotopy, there are exactly two such loops, providing two non real times, and we can orient the loops so that the complex times both belong to the upper half-plane $\H$ (see Figure~\ref{fig:tau}). These complex times are given (up to sign depending on the orientation of the loop) by $\frac1{2\pi i} \mathrm{Res}\left(\frac1{P_\eps}\right)(z_j)= \frac1{2\pi i P_\eps'(z_j)} $, where $z_j$ is the singular point inside the loop. In particular, the complex times depend only on the homotopy class of the loop. Let us call $1,2,3, 4,$ the four quadrants when turning in the positive direction. Each loop joins two adjacent quadrants $i,j$.  Let  $\tau_{i,j}$  be the travel time from quadrant $i$ to quadrant $j$ along a curve from $\infty$ to $\infty$ not intersecting the connecting graph. We have two times, $\tau_{i,j}$ and $\tau_{i',j'}$, corresponding to the two non homotopic loops. The very subtle  theorem of Douady and Sentenac is the following.
\begin{figure}\begin{center}
\includegraphics[width=5.5cm]{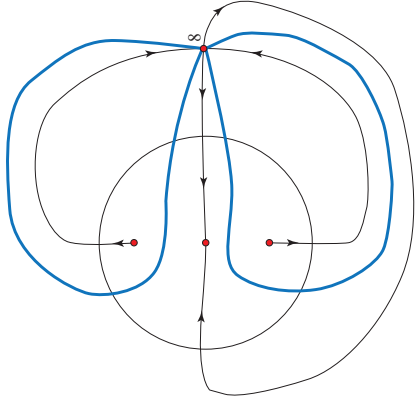}
\caption{The two loops based at infinity and providing the two times $\tau_{i,j}$ and $\tau_{i',j'}$. The figure is on $\CP^1$ minus a point. }\label{fig:tau}\end{center}\end{figure}

\begin{theorem}\label{thm:DS} \emph{\cite{DS}} The map $F: W_j\rightarrow \H^2$, given by $F(\eps)=(\tau_{i,j},\tau_{i',j'})$ is a holomorphic diffeomorphism.\end{theorem}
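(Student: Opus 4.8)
The plan is to show that $F$ is a holomorphic, proper local biholomorphism onto $\H^2$ of degree one; since $\H^2$ is simply connected, this forces $F$ to be a biholomorphism, and as a bonus recovers the simple connectivity of $W_j$, since $W_j\cong\H^2$ is contractible. \emph{Holomorphy and range.} On $W_j$ the three singular points $z_1,z_2,z_3$ are simple and distinct, so by the implicit function theorem they depend holomorphically on $\eps$ and $P_\eps'(z_m)\neq0$; hence each $\tau=\frac1{2\pi i\,P_\eps'(z_m)}$ is holomorphic and so is $F$. The two recorded times are the residues at two of the three singular points, and the identity $\sum_m 1/P_\eps'(z_m)=0$ shows the third equals $-(\tau_{i,j}+\tau_{i',j'})$, so no information is lost. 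That $\mathrm{Im}\,\tau>0$ for the chosen orientations is built into the definition of $W_j$: it holds at one base point by reading off the landing pattern of the separatrices, and $\mathrm{Im}\,\tau$ can only change sign where $P_\eps'(z_m)\in i\R^*$, i.e.\ on a homoclinic surface $(H_i)$, which is excluded from $W_j$. By continuity $F(W_j)\subset\H^2$.

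\emph{Local biholomorphism.} I would use the roots $(z_1,z_2)$, with $z_3=-z_1-z_2$, as local holomorphic coordinates on $W_j$ (legitimate off $\Delta=0$). Writing $r_m=1/P_\eps'(z_m)$, a direct computation gives
\[
\det\frac{\partial(r_1,r_2)}{\partial(z_1,z_2)}=\frac{-6}{\prod_{m<n}(z_m-z_n)^2},
\]
which is a nonzero multiple of the reciprocal of the discriminant and is nonvanishing precisely when the roots are distinct. Since this holds throughout $W_j$, $F$ is a local biholomorphism. \emph{Properness.} The boundary of $W_j$ consists of the homoclinic surfaces $(H_i)$ and the discriminant locus $\Delta=0$. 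On $(H_i)$ some $P_\eps'(z_m)\in i\R^*$, so the corresponding time becomes real; and because $\tau_a+\tau_b+\tau_c=0$, even when the real time is the "third" one this forces $\mathrm{Im}\,\tau_a,\mathrm{Im}\,\tau_b\to0$. On $\Delta=0$ two roots collide and $P_\eps'\to0$ there, so the corresponding residue, hence $\tau$, tends to $\infty$. Finally, the conic structure makes $W_j$ a cone on which $F$ is homogeneous of degree $-2$ under positive scaling, so $\eps\to0$ sends $\tau\to\infty$ and $\|\eps\|\to\infty$ sends $\tau\to0\in\partial\H$. In every case $F(\eps)$ leaves each compact subset of $\H^2$, so $F$ is proper.

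\emph{From covering to bijectivity.} A proper local biholomorphism onto the connected manifold $\H^2$ is a finite covering map. Its image is open (local biholomorphism) and closed (properness) and nonempty, hence all of $\H^2$: this gives surjectivity. As $\H^2$ is simply connected the covering is trivial of some degree $d$, and $F$ is a biholomorphism as soon as $d=1$; equivalently, it suffices to exhibit one point of $\H^2$ with a single preimage.

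\emph{The main obstacle: rigidity.} The degree-one statement — a vector field of combinatorial type $W_j$ is determined by its two complex times — is the crux of the theorem, and I would establish it by constructing the inverse explicitly in the spirit of Douady and Sentenac. Passing to the rectifying time coordinate $w=\int dz/P_\eps(z)$ turns $\Omega=\CP^1\setminus\{z_1,z_2,z_3,\infty\}$ into a translation surface obtained by gluing half-planes along horizontal strips whose real and imaginary parts are prescribed by $(\tau_{i,j},\tau_{i',j'})$ and whose gluing pattern is that of $W_j$. One checks that the resulting abstract surface is biholomorphic to $\CP^1$ with four marked points, and that the push-forward of the translation field $\partial_w$ is a meromorphic vector field with simple zeros at the three finite marked points and a pole of order one at the point coming from $\infty$; normalizing it to the form $z^3+\eps_1z+\eps_0$ recovers a \emph{unique} $\eps\in W_j$. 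This yields a holomorphic inverse $G:\H^2\to W_j$, and the uniqueness of the construction simultaneously gives $d=1$. I expect this realization, rather than any of the preceding steps, to be the genuinely hard part.
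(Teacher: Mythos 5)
Your proposal is correct, and it is worth situating it against the paper: the paper does not actually prove Theorem~\ref{thm:DS} --- the result is quoted from \cite{DS}, and Section~\ref{sec:DS} only records the ``spirit of the proof'', namely the construction of the inverse map: given $(\tau_{i,j},\tau_{i',j'})\in\H^2$, glue strips of these widths according to the combinatorial pattern of $W_j$, observe that the resulting Riemann surface is $\CP^1$ minus three points carrying the push-forward of $\dot t=1$, and normalize the resulting cubic field uniquely to $z^3+\eps_1z+\eps_0$. That is precisely your final ``rigidity'' step, so on the crux you and the paper coincide (and both defer the hard details of the realization to Douady--Sentenac). What you add is the proper-local-biholomorphism scaffolding, and it is sound: your Jacobian is exactly right --- with $z_3=-z_1-z_2$, $u=z_1-z_2$, $v=z_1-z_3$, $w=z_2-z_3$, one finds $r_1=1/(uv)$, $r_2=-1/(uw)$ and $\det\partial(r_1,r_2)/\partial(z_1,z_2)=-6u^2/(u^4v^2w^2)=-6/\prod_{m<n}(z_m-z_n)^2$, the reciprocal of the discriminant up to the factor $-6$ --- so $F$ is a local biholomorphism off $\Delta=0$. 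This scaffolding buys a genuine economy: properness plus simple connectivity of $\H^2$ reduces injectivity to exhibiting a \emph{single} fiber that is a singleton, whereas the bare realization argument must prove uniqueness for every value of $(\tau_{i,j},\tau_{i',j'})$; it also makes the simple connectivity of $W_j$ (the corollary following Theorem~\ref{thm:DS}) an output rather than a separate fact. Two caveats. First, your properness argument as $\eps\to\infty$ only handles radial escape: for a sequence whose spherical part tends to $\Delta=0$, the scaling factor $\delta^{-2}\to 0$ competes with the blow-up of the times, and excluding convergence to an interior point of $\H^2$ requires invoking $\tau_1+\tau_2+\tau_3=0$ together with the positivity of the two recorded imaginary parts; this is fixable but not automatic from homogeneity. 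Second, the scaffolding is logically redundant within your own plan: once the realization step produces a two-sided inverse, bijectivity plus holomorphy already yields biholomorphy (a holomorphic bijection between equidimensional complex manifolds is biholomorphic), so properness could be dropped entirely --- its real value is that it lets you weaken the realization step to surjectivity plus uniqueness at one point.
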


The tuple $(\tau_{i,j},\tau_{i',j'})$ is the \emph{Douady-Sentenac analytic invariant} announced in the introduction. We will come back to the idea of the proof in Section~\ref{sec:DS}.

\begin{corollary} $W_1$ and $W_2$ are simply connected open sets in $\C^2$. Because of the conic structure of the bifurcation diagram, their intersection with $\S^3$ is also simply connected. \end{corollary}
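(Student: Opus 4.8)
The plan is to derive both assertions directly from Theorem~\ref{thm:DS}. The first assertion is essentially immediate: since $F:W_j\to\H^2$ is a holomorphic diffeomorphism, $W_j$ is homeomorphic to $\H^2$, and $\H$ is convex, hence contractible, so $\H^2$ is contractible and in particular simply connected. A homeomorphism preserves simple connectivity, so each $W_j$ is simply connected. First I would state this one-line deduction, being careful to note that $F$ being a diffeomorphism means it is in particular a homeomorphism onto its image $\H^2$, so no extra argument about the topology of $W_j$ is needed beyond quoting the theorem.

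The second assertion requires transferring simple connectivity of $W_j\subset\C^2\cong\R^4$ to simple connectivity of $W_j\cap\S^3$. The key structural input is the conic structure recorded in \eqref{simeq}: $W_j$ is invariant under the $\R^+$-action $\delta\cdot(\eps_1,\eps_0)=(\delta^2\eps_1,\delta^3\eps_0)$, because the change of variables $(z,t)\mapsto(\delta z,t/\delta^2)$ is a topological equivalence of the vector fields and hence preserves the qualitative landing pattern of the separatrices that defines $W_1$ and $W_2$. The plan is to show that this action, together with the fact that $W_j$ avoids $\eps=0$, exhibits $W_j$ as homeomorphic to a product $(W_j\cap\S^3)\times\R^+$, from which $\pi_1(W_j)\cong\pi_1(W_j\cap\S^3)$ and the simple connectivity transfers.

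Concretely, the retraction map I would use is $r:\eps\mapsto\delta(\eps)\cdot\eps$, where $\delta(\eps)>0$ is the unique positive scalar placing $\delta(\eps)\cdot\eps$ on $\S^3=\{\pl\eps\pl=\mathrm{Cst}\}$. Existence and uniqueness of $\delta(\eps)$ follow because $t\mapsto\pl t\cdot\eps\pl^2=t^4|\eps_1|^2+t^6|\eps_0|^2$ is strictly increasing from $0$ to $\infty$ on $\R^+$ for $\eps\neq0$, and $\delta(\eps)$ depends continuously on $\eps$ by the implicit function theorem since this function is smooth with nonvanishing $t$-derivative. Then $r$ is a continuous retraction of $W_j$ onto $W_j\cap\S^3$ (continuous because $\delta$ is, and well-defined into $W_j\cap\S^3$ by the scaling invariance of $W_j$), and the straight-line homotopy along orbits, $H(\eps,s)=\bigl((1-s)+s\,\delta(\eps)\bigr)\cdot\eps$, deformation-retracts $W_j$ onto $W_j\cap\S^3$ within $W_j$. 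A deformation retract induces an isomorphism on $\pi_1$, so $W_j\cap\S^3$ is simply connected because $W_j$ is.

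The main obstacle is not topological but one of bookkeeping: I must verify that $W_j$ is genuinely invariant under the scaling action and disjoint from $\eps=0$, so that the retraction and homotopy stay inside $W_j$. Disjointness is clear because at $\eps=0$ the origin is a triple point and there are no landing separatrices of the generic type, so $0\notin W_1\cup W_2$; invariance is the substantive point, and it follows from \eqref{simeq} since the scaling is a genuine topological equivalence preserving the separatrix structure at $\infty$ and the landing pattern at finite singular points that distinguishes $W_1$ from $W_2$. Once invariance is granted, the homotopy $H$ automatically remains in $W_j$, and the argument closes.
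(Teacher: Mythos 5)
Your proof is correct and takes essentially the same route as the paper: simple connectivity of $W_j$ is read off directly from Theorem~\ref{thm:DS} since $W_j$ is biholomorphic to the contractible set $\H^2$, and the statement for $W_j\cap\S^3$ rests on the conic structure \eqref{simeq}. Your explicit deformation retraction along the weighted scaling orbits $(\delta^2\eps_1,\delta^3\eps_0)$ merely spells out the details that the paper leaves implicit in the phrase ``because of the conic structure,'' and all the verifications (invariance of $W_j$ under the action, $0\notin W_j$, continuity of $\delta(\eps)$) are accurate.
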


\subsection{The bifurcation diagram for $\eps\in \S^3$}

We now have all the ingredients to give the global bifurcation diagram for parameter values inside the sphere $\S^3$. Let us first introduce the following notation.

\begin{notation}\label{notation}

\begin{itemize}
\item $(H_{ij})$, with $j=i+2$, denotes the transversal intersection of  $(H_i)$ with $(H_j)$, whose bifurcation diagram appears in Figure~\ref{figureeight}. 
\item If we look at Figure~\ref{figureeight}, it is clear that $(H_{i,i+2})$ is in the boundary of $(H_{i\pm1})$. Looking for instance at $W_1$, then it has corners at the intersection of the closures of $(H_1)$ and $(H_2)$, $(H_1)$ and $(H_3)$, and $(H_3)$ and $(H_4)$ which we denote respectively $(H_{12})$, $(H_{13})$ and $(H_{34})$. Similarly, $W_2$ has corners at the intersection of the closures of $(H_1)$ and $(H_3)$, $(H_1)$ and $(H_4)$, and $(H_2)$ and $(H_3)$ which we denote respectively $(H_{13})$, $(H_{14})$ and $(H_{23})$.  
\item $(H_{ij}^0)$ with $j=i+1$ corresponds to a regular point of $\Delta=0$ (as in Figure~\ref{fig_parabolic}(a)), for which there are heteroclinic loops $(H_i)$ and $(H_j)$ through the parabolic point, and with bifurcation diagram as in Figure~\ref{Delta_regular}. 
\item $(H_i^0)$ corresponds to a codimension 3 point with a parabolic point, a heteroclinic loop $(H_i)$ through the parabolic point, and an additional homoclinic loop $(H_{i+2})$ (as in Figure~\ref{fig_parabolic}(b)), and with bifurcation diagram as in Figure~\ref{cod3}. 
\end{itemize} 
\end{notation}

\begin{theorem} Each of the two connected components $W_j$ of generic vector fields fills an open set of  $\S^3$. The boundary of each connected component is a union of pieces of the four homoclinic surfaces $(H_j)$, $j=1, \dots, 4$, linked along curves of codimension 2 bifurcations with vertices at codimension 3 bifurcation points. These bifurcation surfaces are organized as described in Figures~\ref{fig:pentagons} and \ref{fig:tetrahedron}. 
\begin{figure}
\begin{center}
\subfigure[$(H_1)$]{\includegraphics[width=5cm]{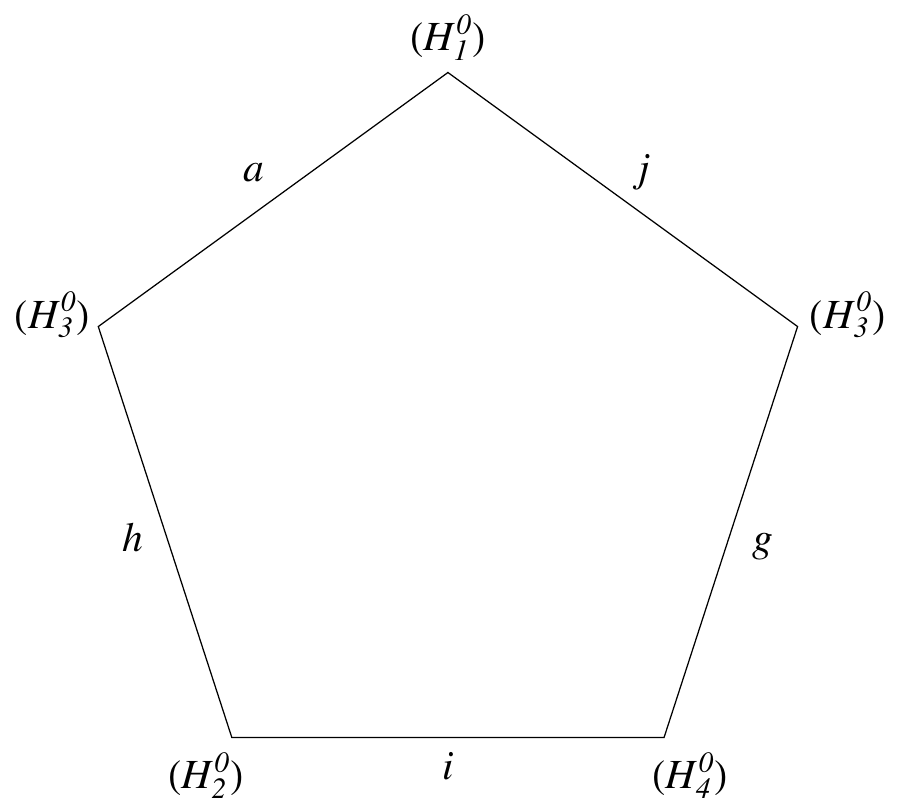}}\qquad\qquad
\subfigure[$(H_2)$]{\includegraphics[width=5cm]{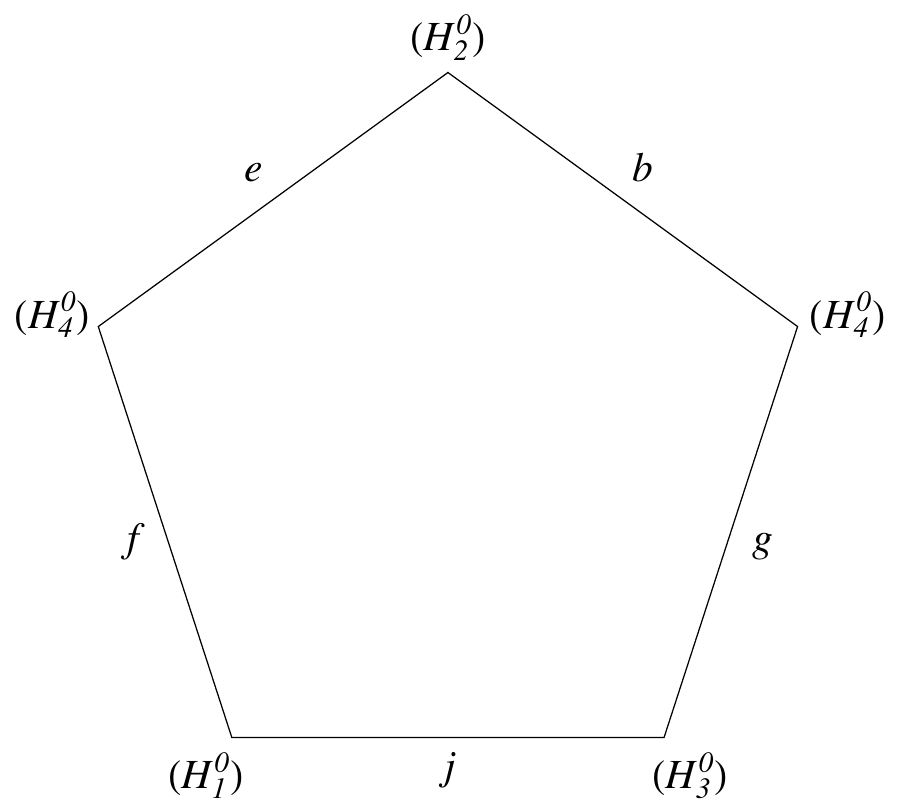}}
\subfigure[$(H_3)$]{\includegraphics[width=5cm]{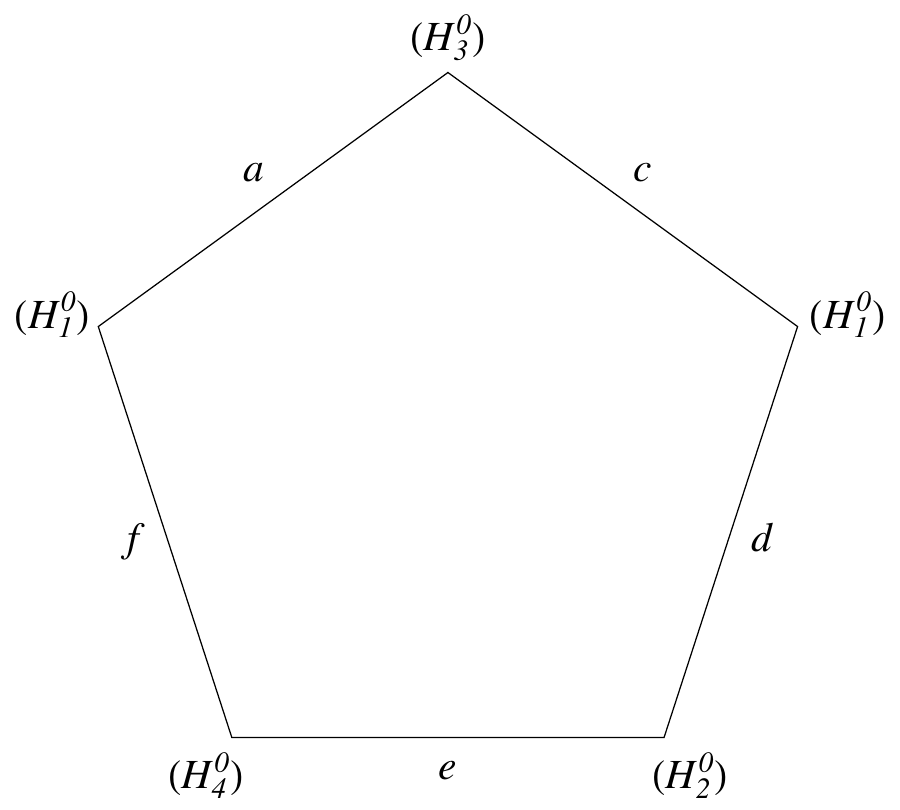}}\qquad\qquad
\subfigure[$(H_4)$]{\includegraphics[width=5cm]{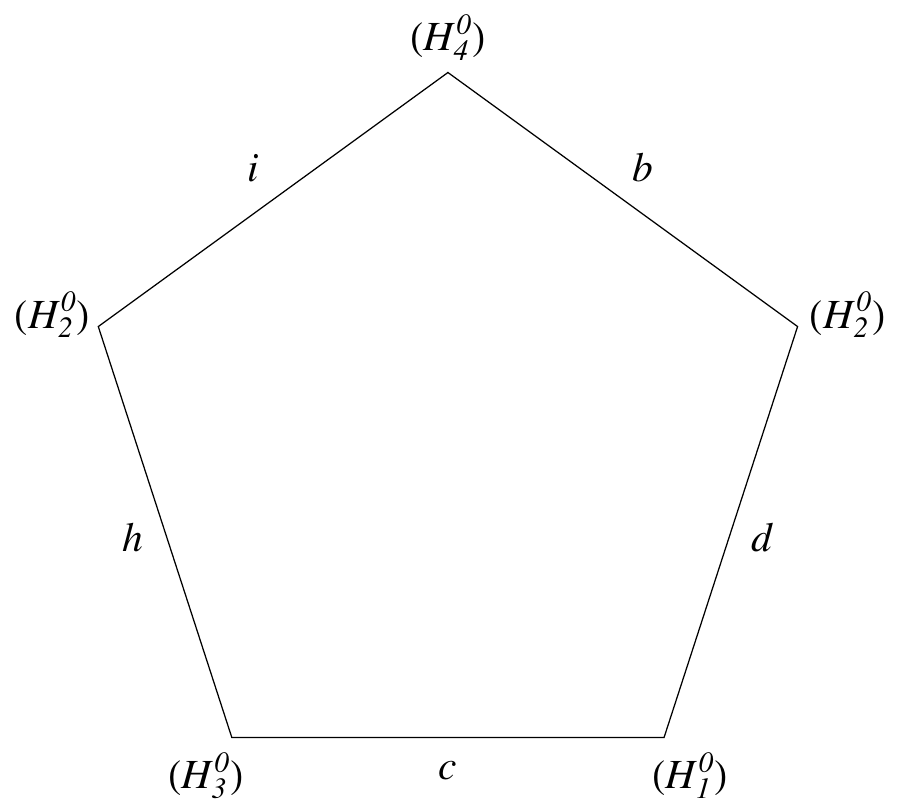}}
\caption{\label{fig:pentagons} The four surfaces of homoclinic loops and their pentagonal boundaries to be glued according to the letters marked. Notation~\ref{notation} is used. The meaning of the small letters ($a$-$j$) is the following: \lq\lq $a$\rq\rq: $(H_{13})$;  \lq\lq $b$\rq\rq: $(H_{24})$; \lq\lq $c$\rq\rq: $(H_{12})$; \lq\lq $d$\rq\rq: $(H_{12}^0) $; \lq\lq $e$\rq\rq: $(H_{14})$; \lq\lq$f$\rq\rq: $(H_{14}^0)$; \lq\lq $g$\rq\rq: $(H_{34}^0)$; \lq\lq $h$\rq\rq: $(H_{23}^0)$; \lq\lq $i$\rq\rq: $(H_{23})$; \lq\lq $j$\rq\rq: $(H_{34})$.}
\end{center}
\end{figure}
\begin{figure}
\begin{center}
\includegraphics[width=7cm]{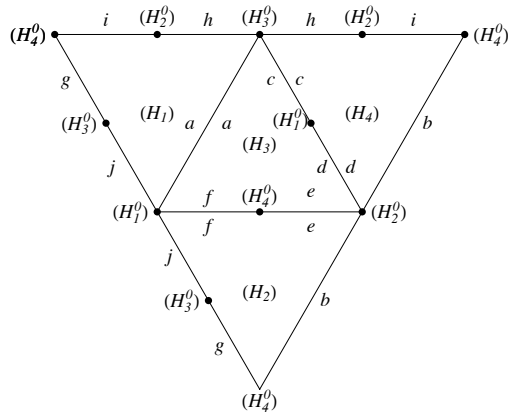}\caption{The four surfaces of homoclinic loops and their boundaries to be glued according to the letters marked. }\label{fig:tetrahedron} 
\end{center}
\end{figure}
\end{theorem}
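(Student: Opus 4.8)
The plan is to build the global diagram by gluing the three local models already in hand---near a regular point of $\Delta=0$ (Figure~\ref{Delta_regular}), near a figure-eight segment (Figure~\ref{figureeight}), and near a codimension $3$ point (Figure~\ref{cod3})---using the order-four symmetry \eqref{rotation4} to cut the bookkeeping and the Douady--Sentenac rigidity to pin down the global topology. I would begin by passing to the chart $\S^3\cong\ov{\R^3}$ from the conic structure and recording the two facts that organize everything: by Theorem~\ref{thm:DS} together with $C(2)=\binom{4}{2}/3=2$ there are exactly two generic chambers $W_1,W_2$, and by the corollary each is simply connected. Hence the full bifurcation set is the common frontier $\partial W_1=\partial W_2$, whose two-dimensional part is $\bigcup_{j}(H_j)$ and whose lower strata are the two figure-eight segments (the curves $(H_{13}),(H_{24})$), the knot $\Delta=0$, and the four codimension $3$ points $(H_j^0)$ cutting that knot into the regular arcs $(H_{j,j+1}^0)$.

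Next I would compute the boundary of one surface, say $(H_1)$, and rotate the result. The loop $(H_1)$ encircles a center $z$ with $P_\eps'(z)\in i\R^*$; its frontier is reached either when $\mathrm{Im}\,P_\eps'(z)\to 0$, i.e. when $z$ collides with a neighbouring singular point and we land on $\Delta=0$, or when a competing separatrix configuration is forced, i.e. when a second homoclinic connection is born. The first mechanism, governed by the proposition on regular points of $\Delta=0$, contributes two arcs $(H_{12}^0)$ and $(H_{14}^0)$ along which $(H_1)$ merges tangentially (to order $3/2$) with the neighbours $(H_2)$ and $(H_4)$; the second, governed by the proposition on the figure-eight through the transversality \eqref{transversal}, contributes one figure-eight arc together with the two transversal corner-arcs $(H_{12}),(H_{14})$ shared with the same neighbours. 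These five arcs close up into the pentagon of Figure~\ref{fig:pentagons}(a): as an abstract $2$-cell $(H_1)$ is a disk, and I would note that its five vertices are glued onto the four codimension $3$ points $(H_j^0)$, where---by the codimension $3$ model (Figure~\ref{cod3})---all four surfaces come together.

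I would then invoke \eqref{rotation4}, which sends $(H_j)\mapsto(H_{j+1})$ and cyclically permutes the strata, to produce the remaining three pentagons and, simultaneously, all the edge identifications. The gluing is dictated by incidence: the two figure-eight arcs, the four regular $\Delta$-arcs, and the four transversal corner-arcs make up the ten edges $a$--$j$ of Figure~\ref{fig:pentagons}, and each occurs on precisely two of the four pentagons. The four disks therefore glue, edge to edge, into a single closed surface; reading off the matching letters yields Figure~\ref{fig:tetrahedron}, and the two regions into which this surface separates $\S^3$ are $W_1$ and $W_2$, whose corner lists are exactly those recorded in Notation~\ref{notation}.

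The crux---and the genuinely hard part---is the very last implication. Because the $(H_j)$ are mutually tangent along $\Delta=0$ and are pinched together at the codimension $3$ points, the glued surface is badly folded in $\S^3$, and it is not visually apparent that it bounds only the two chambers $W_1,W_2$ or that these are simply connected; a direct spatial argument is exactly the sort of thing that, as the authors stress, is very difficult by classical means. Here I would lean entirely on Theorem~\ref{thm:DS}: the biholomorphism $F\colon W_j\to\H^2$ guarantees simple connectivity and forbids any hidden chamber, which rigidifies the purely combinatorial gluing above into the asserted global picture. The only residual verifications---that adjacent faces are tangent of order $3/2$ along $\Delta=0$ and transversal along the figure-eight segments, and that the four faces indeed meet at each $(H_j^0)$---are already supplied by the proofs of the three preceding propositions.
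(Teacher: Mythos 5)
Your proposal is correct and follows essentially the same route as the paper: both proofs assemble the previously established local models (the regular points of $\Delta=0$, the figure-eight segments, and the codimension~$3$ points) using the symmetry \eqref{rotation4} into the pentagon/tetrahedron combinatorics of Figures~\ref{fig:pentagons} and \ref{fig:tetrahedron}, while deferring the global facts (exactly two generic chambers, each simply connected) to the Douady--Sentenac theorem and its corollary. The only difference is organizational --- the paper starts from the vertices, gluing two copies of the codimension~$3$ diagram along $(H_{13})$ and arguing that the free boundaries of $(H_1)$ and $(H_3)$ can only close up along $(H_{24})$, whereas you work face-by-face from the boundary of one surface and propagate by symmetry --- but the ingredients and the final counting (eight vertices, ten edges, four faces, a tetrahedron with four subdivided edges) are identical.
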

\begin{proof} To understand Figure~\ref{fig:pentagons}, we first consider Figure~\ref{fig:double} gluing the bifurcation diagrams of the two codimension $3$ bifurcations $(H_1^0)$ and $(H_3^0)$. 
\begin{figure}\begin{center}
\includegraphics[ width=10 cm]{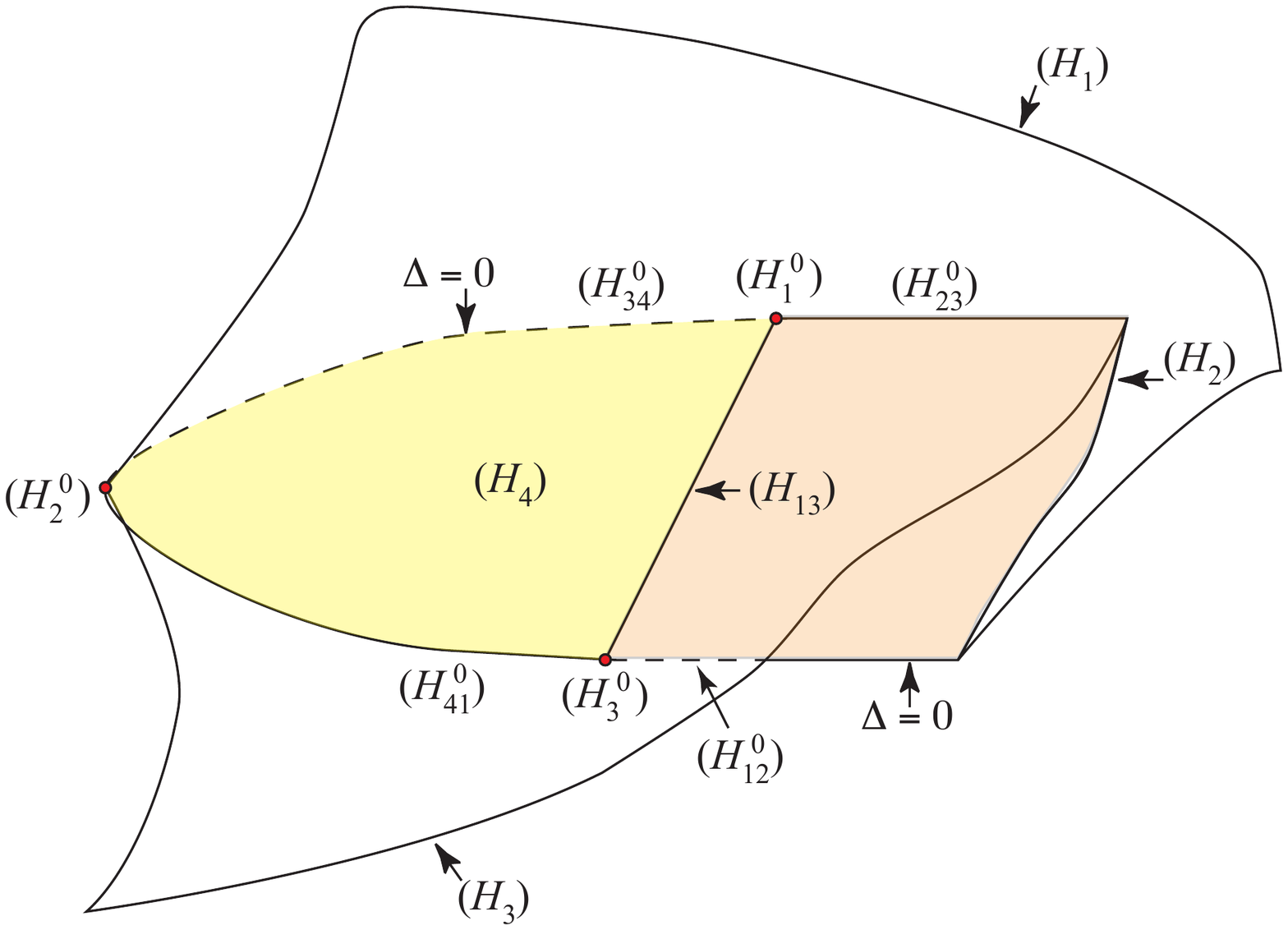} \caption{Gluing together two bifurcation diagrams for codimension $3$ points. The two surfaces $(H_1)$ and $(H_3)$ intersect along $(H_{13})$.}
  \label{fig:double}\end{center}
\end{figure}
This figure is obtained by linking together two copies of Figure~\ref{cod3} along $(H_{13})$. Now, we have \lq\lq free\rq\rq \ surfaces $(H_1)$ and $(H_3)$. Their missing boundary can only be along $(H_{24})$. In this figure we have that $(H_{34}^0)$ and $(H_{41}^0)$ have to merge at $(H_4^0)$, while $(H_{23}^0)$ and $(H_{12}^0)$ have to merge at $(H_2^0)$. 
The \lq\lq polyhedron\rq\rq\ of Figure~\ref{fig:pentagons} has eight vertices, ten edges and four faces. Four of the vertices are attached to three edges and four of them are artificial ones, since attached to only two edges. So the gluing can also be represented as that of an ordinary tetrahedron, with four of its edges cut in two parts as in Figure~\ref{fig:tetrahedron}. \end{proof}

\section{Description of the homoclinic bifurcations in the framework of Douady and Sentenac}\label{sec:DS}

\subsection{The construction of Douady and Sentenac} 
Let us consider a generic vector field for $\eps\in W_1$ or $\eps\in W_2$. The idea of Douady and Sentenac is that, when the time is extended to the complex domain, then all points of the phase space, except the singularities,  belong to  the \lq\lq complex trajectory\rq\rq\ of a single point. Hence, it is natural to reparameterize the phase space by the time $t$, where 
$$t(z)= \int_\infty^z \frac{d\zeta}{p_\eps(\zeta)},$$
but we have to pay attention since the  function $t(z)$ is multivalued. The phase plane minus the separatrices is the union of  two open simply connected regions (See Figure~\ref{fig:tau}). Each of these regions is the image of a strip in the  $t$-coordinate. The boundary of these strips are simply the inverse images of the separatrices (see Figure~\ref{fig:two_strips}), and the singular points have disappeared at infinity. Note that when a singular point is attached to a unique separatrix, then this separatrix is covered twice when sending the strip to the phase plane. The inverse image of $\infty $ is given by $4$ points, one on each boundary line of a strip (a pole can be reached in finite time).
\begin{figure}[h!]
\begin{center}
\subfigure[The two sectors]
{\includegraphics[width= 4cm]{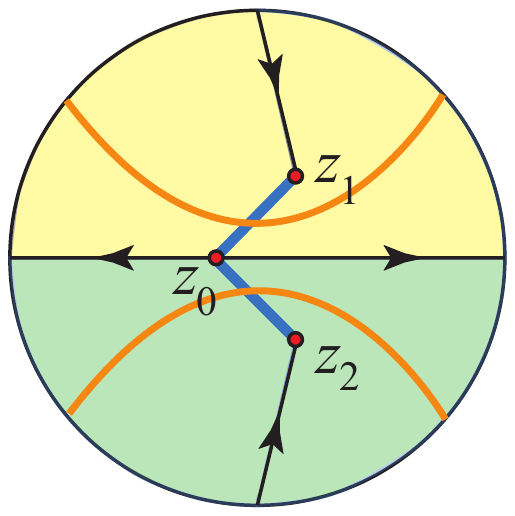}}\qquad
\subfigure[The corresponding strips in $t$-space]
{\includegraphics[width=7.5cm]{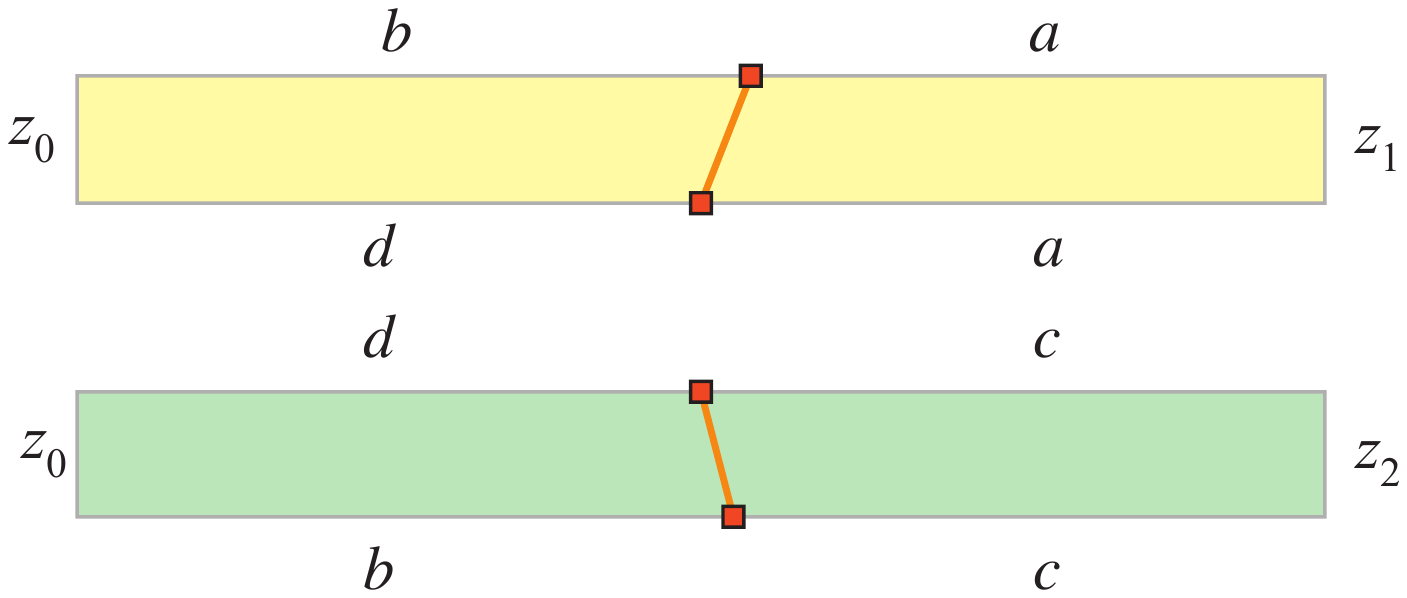}}
\caption{The two (infinite) strips for (a) (corresponding to (V) in Figure~\ref{passage_homocline}). The square dots represents the point at infinity and the strips need to be glued by pairing the identical letters to get a Riemann surface.} \label{fig:two_strips}\end{center} \end{figure}
The widths of the strips (between two images of $\infty$) are precisely given by the \lq\lq complex times\rq\rq\ to go from $\infty$ to $\infty$ without cutting the separatrices (see Figures~\ref{fig:tau}, \ref{fig:two_strips} and description in Section~\ref{sect:open_sets}). (Note that the residue theorem ensures that the time from $\infty$ to $\infty$ along a loop depends only on the homotopy class of the loop in $\CP^1\setminus\{z_1,z_2,z_3\}$.) 

The construction of Douady and Sentenac is then the following: they glue the strips according to the letters appearing in Figure~\ref{fig:two_strips}. This provides a Riemann surface which is exactly $\CP^1$ minus three points. If we put on the strips the constant vector field $\dot t =1$, then this endows $\CP^1$ with a cubic vector field with singular points at the three holes and a pole at infinity. This  vector field is unique up to affine changes of coordinates on the sphere, and hence has a unique presentation as $\dot z = z^3+\eps_1z+\eps_0$, up to $(z,\eps_1,\eps_0)\mapsto(-z,-\eps_1,\eps_0)$.   This is the spirit of the proof of Theorem~\ref{thm:DS}.

\subsection{Extending the construction of Douady and Sentenac to a bifurcation of homoclinic connection} 
Douady and Sentenac only described the generic case. We now complete their construction for the case of homoclinic connections, and explain the passage from $W_1$ to $W_2$ through a homoclinic connection. Figure~\ref{passage_homocline} represents schematically the four possible homoclinic bifurcations. (Note that the four homoclinic bifurcations do not occur for the same values of the parameters, so the singular points might have changed position at the time of the bifurcation: the figures should then be interpreted topologically.)
\begin{figure}[h!]\begin{center}
\includegraphics[width=8.4cm]{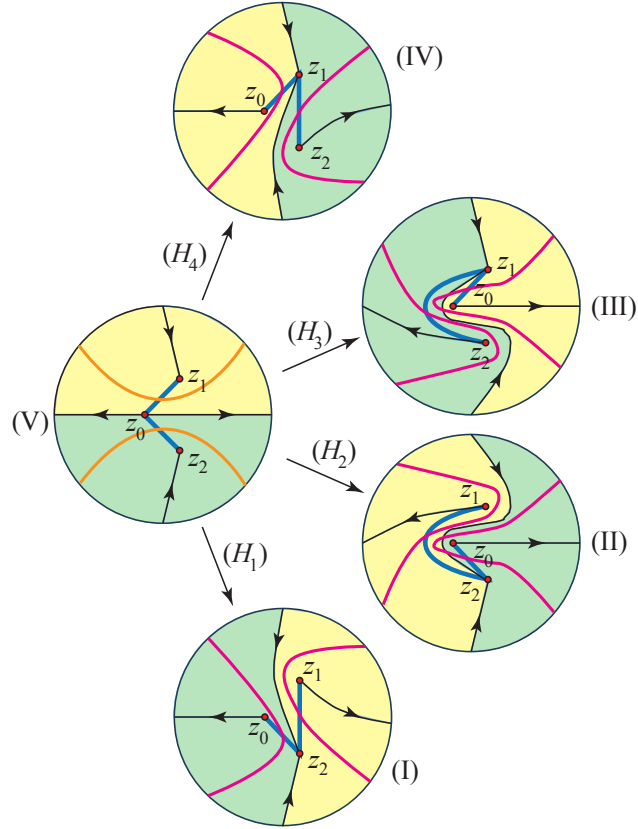}
\caption{In each case  we highlight the images of the two strips  inside a disk containing the singular points. The paths for the computations of the $\tau_{i,j}$ (see Notation~\ref{def:tau_ij}) have been drawn. They link quadrants 1 and 2 on one side, and 3 and 4 on the other side in (V), while they link quadrants 2 and 3, and 1 and 4 in (I)-(IV). The changes from $(\tau_{1,2}, \tau_{3,4})$ in (V) to $(\tau_{1,4}, \tau_{2,3})$ in (I)-(IV) are summarized in Table~\ref{Tab_change_tau}.} \label{passage_homocline} \end{center} \end{figure}

Let us concentrate on the bifurcation $(H_1)$: it can be approached from $W_2$ (corresponding to (V))  or from $W_1$ (corresponding to (I)):  Figures~\ref{fig: homoclinic_strip}  and \ref{fig: homoclinic_strip2} represent, for the approach on each $W_j$, the two regions in phase space limited by the separatrices,  and their representation by strips in $t$-space.
\begin{figure}[h!]
\begin{center}
\subfigure[The two regions for $(H_1)$]
{\includegraphics[width=4cm]{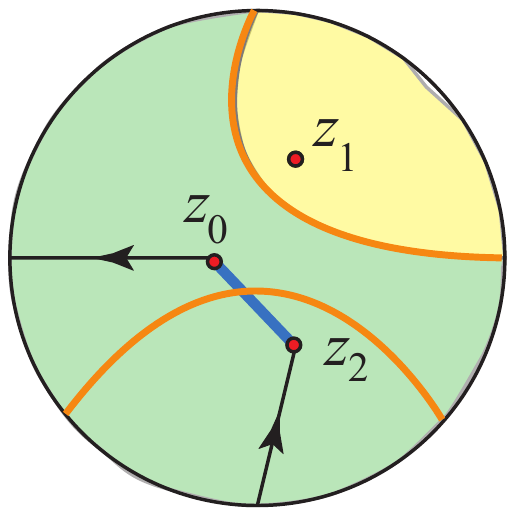}}\qquad
\subfigure[Point of view of $W_2$, i.e. (V)]
{\includegraphics[width=6cm]{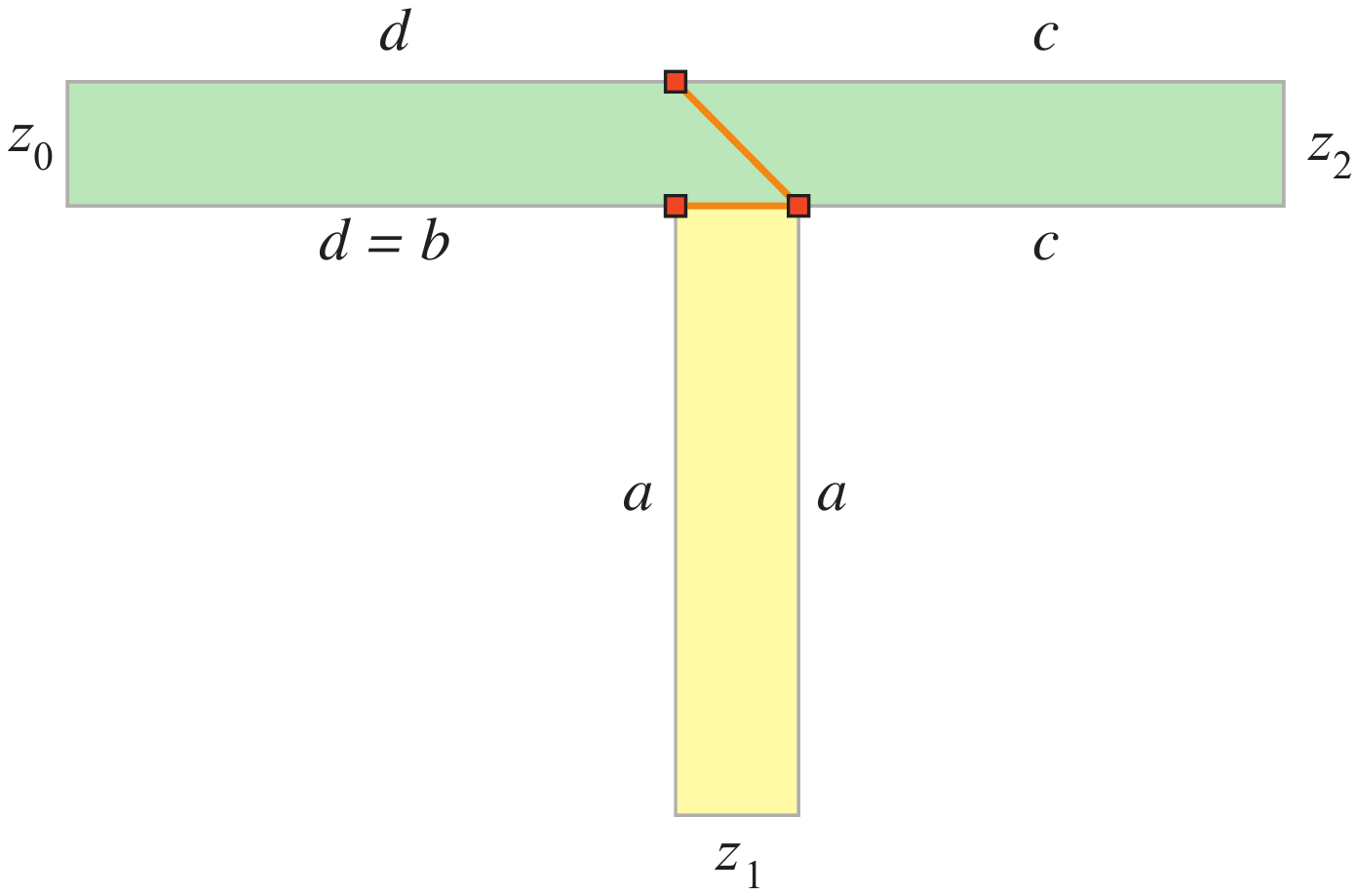}}
\caption{The homoclinic loop $(H_1)$ in (a), and the two strips representations seen from the point of view of $W_2$. } \label{fig: homoclinic_strip}\end{center} \end{figure}

 \begin{figure}[h!]
\begin{center}
\subfigure[The two regions for $(H_1)$]
{\includegraphics[width=4cm]{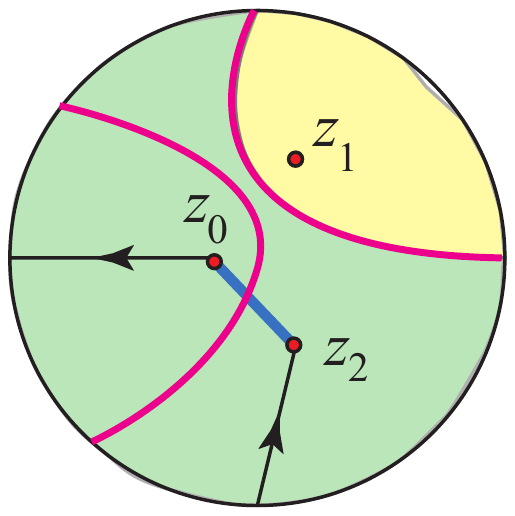}}\qquad
\subfigure[Point of view of $W_1$, i.e. (I)]
{\includegraphics[width=6cm]{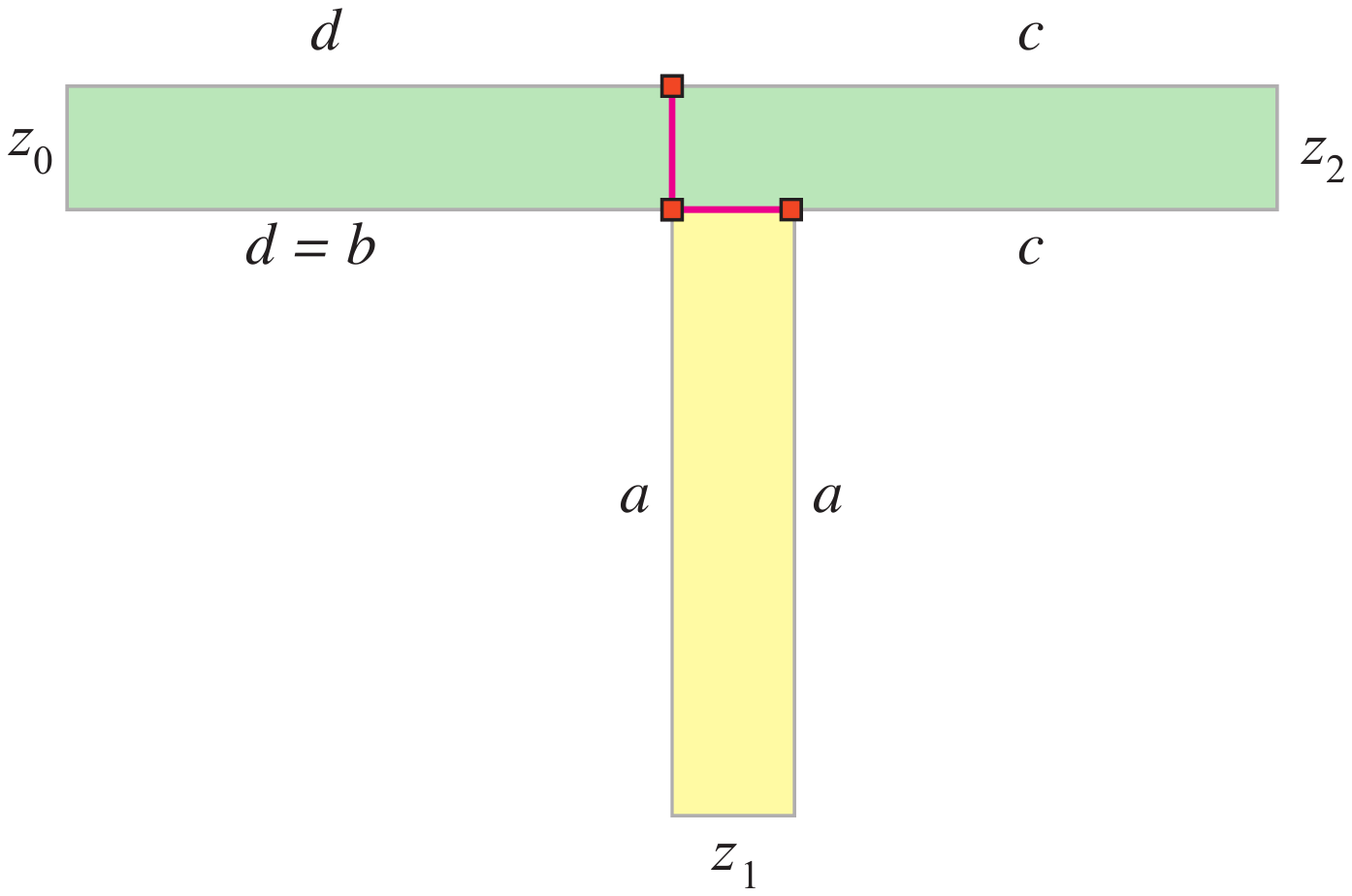}}\caption{The homoclinic loop $(H_1)$ in (a), and the two strips representations seen from $W_1$ in parameter space.} \label{fig: homoclinic_strip2}\end{center} \end{figure}

We see that we do not use the same $\tau$'s for both points of view. Hence, we introduce the following notation.

\begin{notation}\label{def:tau_ij}
Let $\tau_{i,j}$  be the travel time from quadrant $i$ to quadrant $j$ along curves from $\infty$ to $\infty$ as drawn in Figure~\ref{passage_homocline} and corresponding to a the segment transversal to a strip in Figure~\ref{fig:two_strips}. Hence, we have $\tau_{1,2}$ and $\tau_{3,4}$ for $\eps\in W_2$, and $\tau_{2,3} $ and $\tau_{1,4}$ for $\eps\in W_1$.\end{notation}

 A strip presentation of the passage through the homoclinic bifurcation $(H_1)$ appears in Figure~\ref{fig: homoclinic_strip}. The homoclinic loop surrounds a center. Suppose that the homoclinic bifurcation $(H_1)$ occurs for $\eps=\tilde{\eps}\in \ov{W_1}\cap\ov{W_2}$. All orbits inside the homoclinic loop $(H_1)$ have the same period, which we call $\tau_1$ and which is the limit of $\tau_{1,2}$ and $\tau_{1,4}$, when $\eps\to \tilde{\eps}$:
\begin{equation} \tau_1= \lim_{\substack{\eps\to \tilde{\eps}\\ \eps\in W_2}} \tau_{1,2} =- \lim_{\substack{\eps\to \tilde{\eps}\\ \eps\in W_1}} \tau_{1,4}.\label{eq_limite1}\end{equation} Hence, the set of closed orbits is represented by a vertical strip as in Figure~\ref{fig: homoclinic_strip}(b) or (c). Using that the $\tau$'s can be obtained from the residue theorem, it is clear that  
\begin{equation} \tau_{2,3}= \tau_{3,4}+\tau_1.\label{eq_limite2}\end{equation}

Figure~\ref{steps_passage} now presents the different steps of the passage from (V) (in $W_2$) to (I) (in $W_1$) through $(H_1)$. The imaginary part of $\tau_{1,2}$ decreases in (b) until it vanishes. So the left (horizontal) part of the yellow strip becomes thinner and thinner with a zero width at the limit in (c). At the same time the slope of the right part increases until it becomes vertical at $(H_1)$. When passing from (c) to (d), we only change from $(\tau_{1,2}, \tau_{3,4})$ to $(\tau_{1,4}, \tau_{2,3})$ through \eqref{eq_limite1} and \eqref{eq_limite2}. On the other side of $(H_1)$, in (e), the slope of the vertical part decreases (in absolute value) and bends to the left, while a thin half strip appears on the right. 

 \begin{figure}[H]
\begin{center}
\subfigure[Starting from (V) in $W_2$]
{\includegraphics[width=5.5cm]{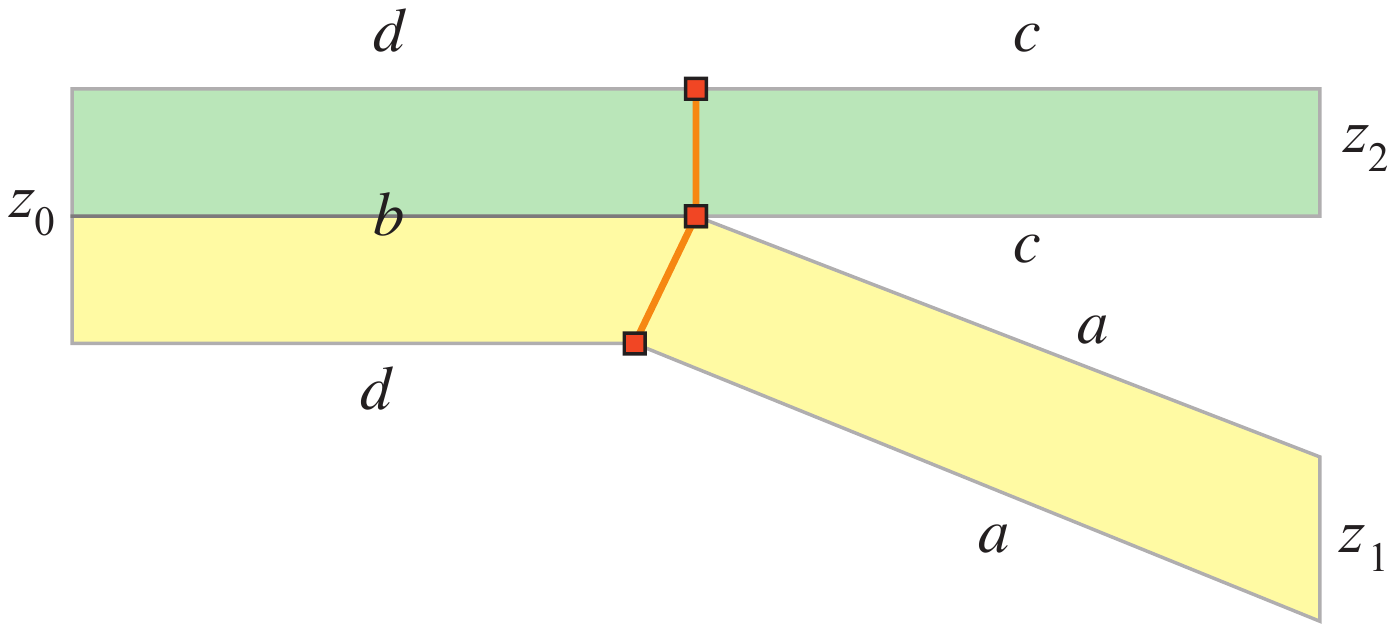}}\qquad
\subfigure[Approaching $(H_1)$ in $W_2$]
{\includegraphics[width=5.5cm]{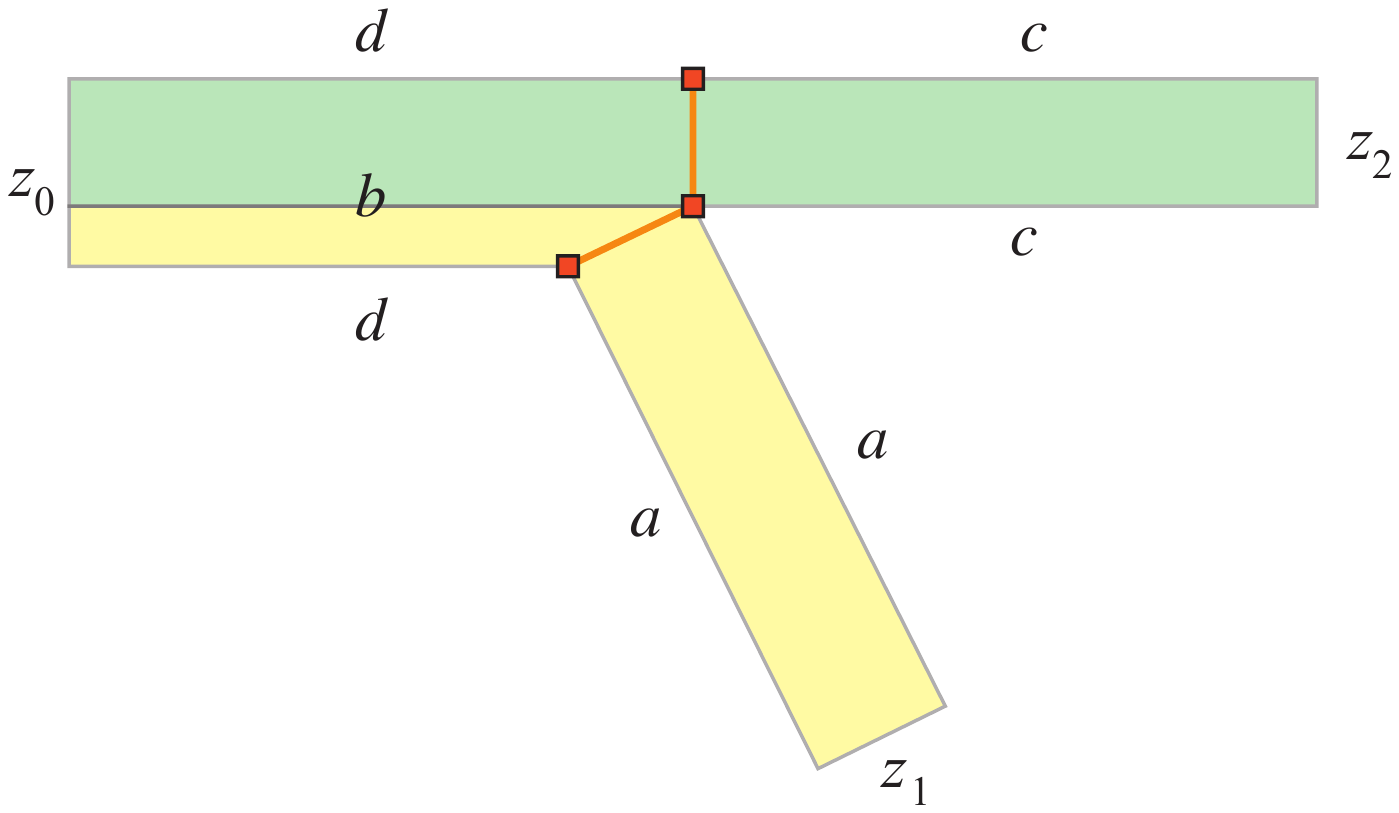}}\qquad
\subfigure[Homoclinic loop $(H_1)$ viewed from $W_2$] 
{\includegraphics[width=5.5cm]{HV_5_bis.pdf}}\qquad
\subfigure[Homoclinic loop $(H_1)$ viewed from $W_1$]
{\includegraphics[width=5.5cm]{HV_5_ter.pdf}}\qquad
\subfigure[Leaving the homoclinic loop in $W_1$]
{\includegraphics[width=5.5cm]{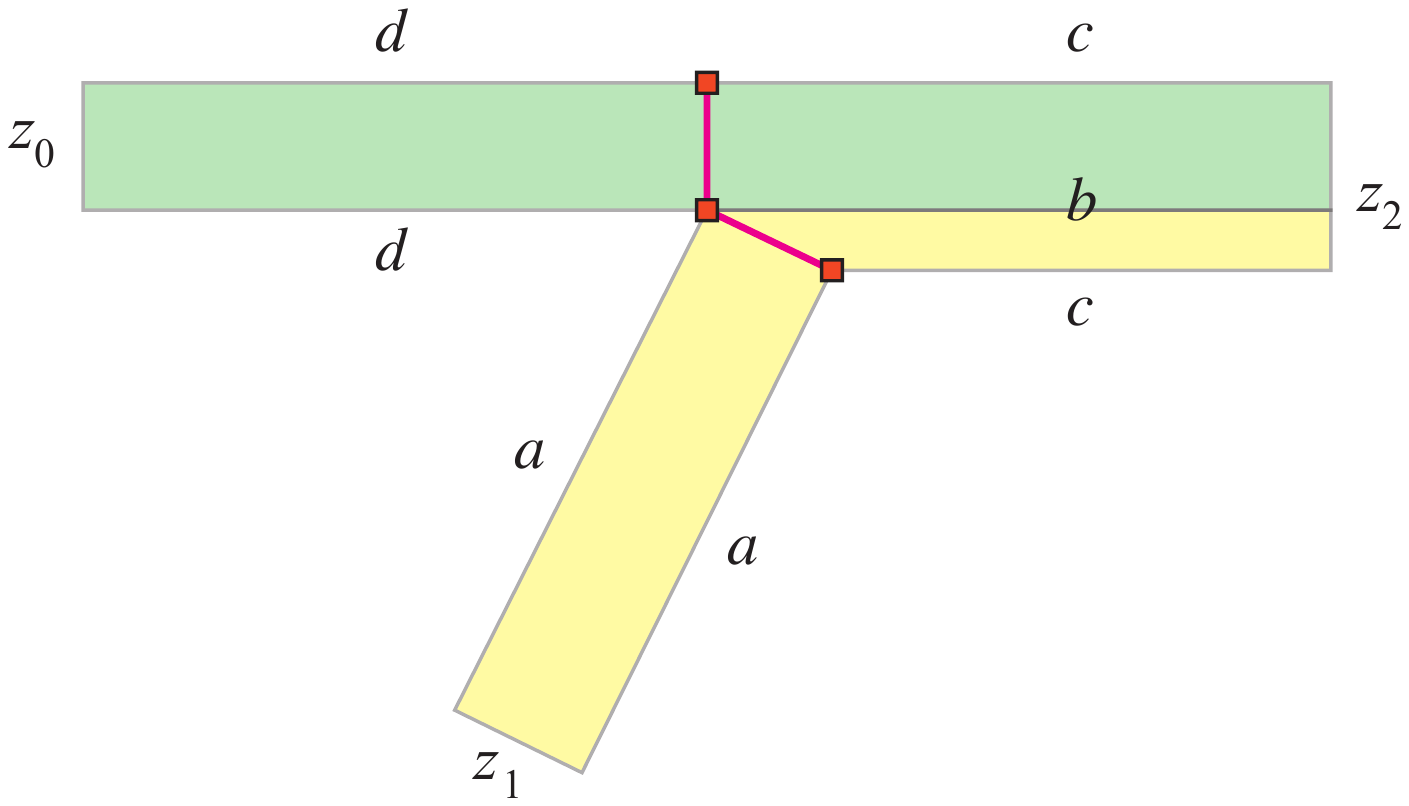}}\qquad
\subfigure[Ending in (I) in $W_1$]
{\includegraphics[width=5.5cm]{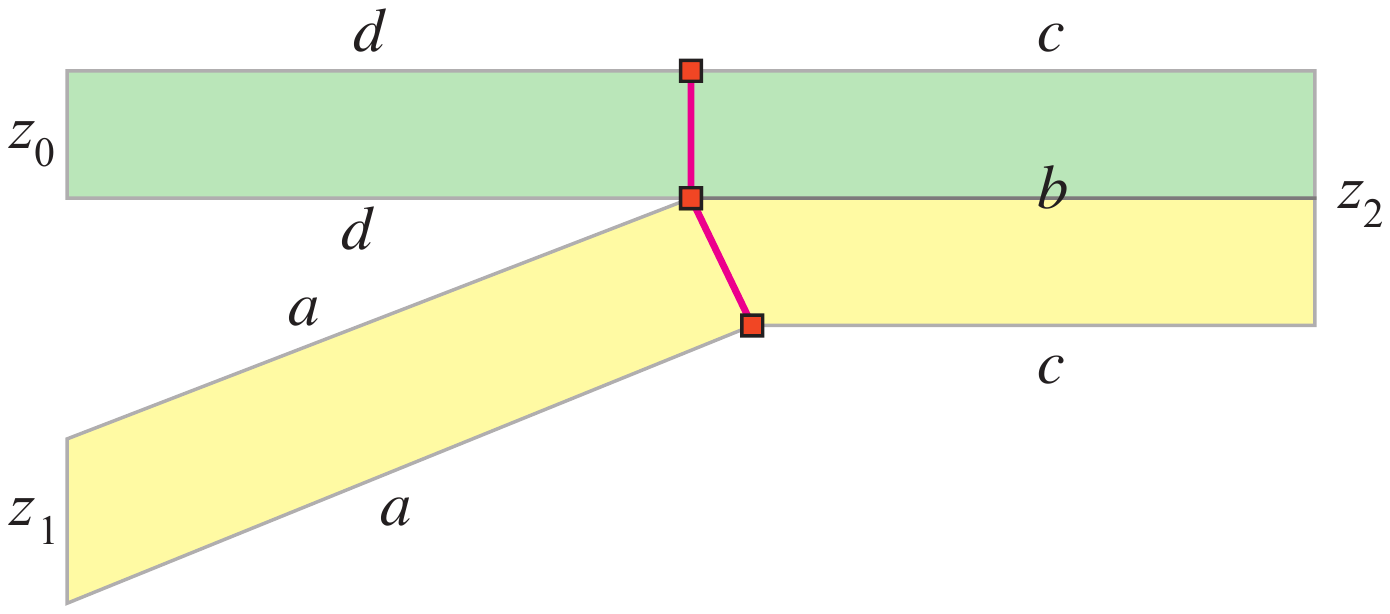}}
\caption{The different steps for passing from (V) to (I) through $(H_1)$.} \label{steps_passage}\end{center} \end{figure}

\begin{table}\label{Tab_change_tau}\begin{tabular}{|c||l|l|l|l|}
\hline
(V)&(I)&(II)&(III)&(IV)\\
\hline 
$\tau_{1,2}$&$\tau_{1,4}'= -\tau_{1,2}$&$\tau_{1,4}'=\tau_{1,2} +\tau_{3,4}$&$\tau_{1,4}'=\tau_{1,2} +\tau_{3,4}$&$\tau_{1,4}'=-\tau_{3,4}$\\
$\tau_{3,4}$&$\tau_{2,3}'=\tau_{1,2}+\tau_{3,4}$&$\tau_{2,3}'=-\tau_{1,2}$&$\tau_{2,3}'=-\tau_{3,4}$&$\tau_{2,3}'=\tau_{1,2}+\tau_{3,4}$\\
\hline
\end{tabular} \caption{Passing from $(\tau_{1,2}, \tau_{3,4})$ in (V) to $(\tau_{1,4}', \tau_{2,3}')$ in (I)-(IV).}\end{table}

 There are four possible homoclinic bifurcations $(H_i)$ occurring in the $i$-th quadrant to pass from $(V)$ to the corresponding case (I)-(IV). 
When passing from (V) to (I) or (II), then $\tau_{1,2}$ becomes real positive (resp. negative) while, when passing from (V) to (III) or (IV)), then $\tau_{3,4}$ becomes real negative (resp. positive). Note that the strips do not need to be horizontal in Figure~\ref{fig:two_strips}. They could be partly slanted as long as the width is given by the $\tau_{i,j}$ (see Figure~\ref{steps_passage}).

 \begin{figure}[H]
\begin{center}
\subfigure[Approaching $(H_1)$]
{\includegraphics[width=5.5cm]{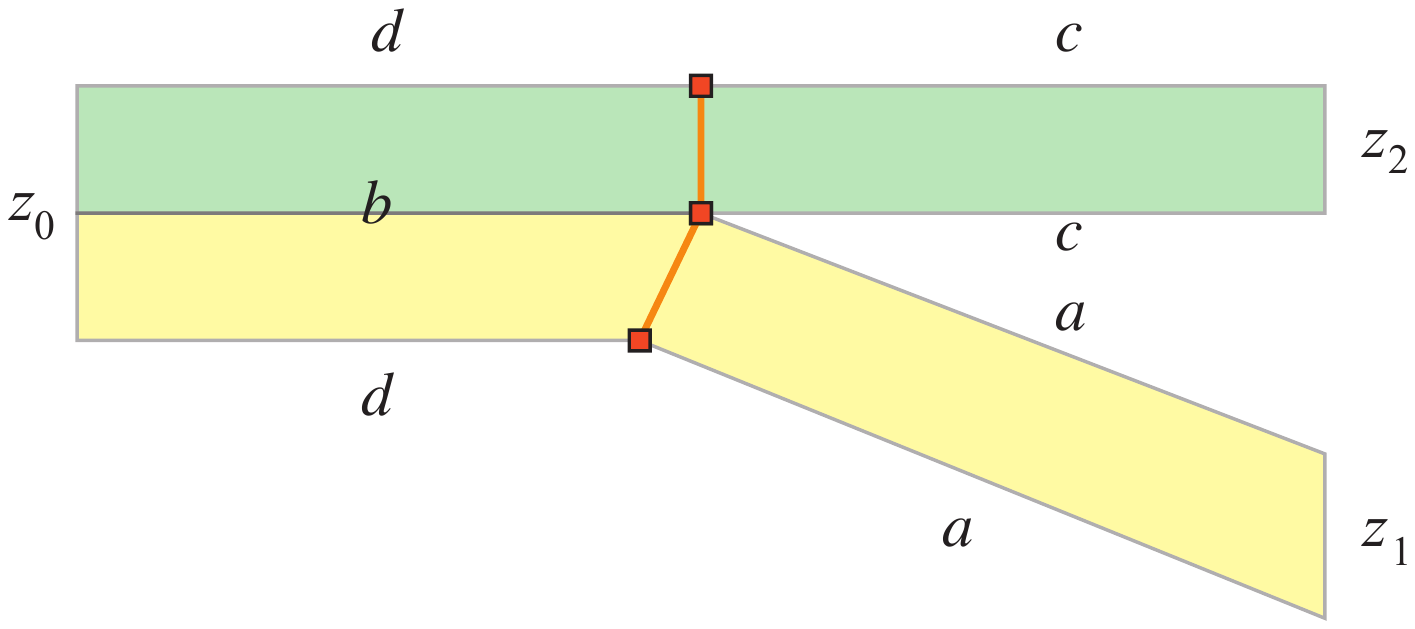}}\qquad
\subfigure[Approaching $(H_2)$]
{\includegraphics[width=5.5cm]{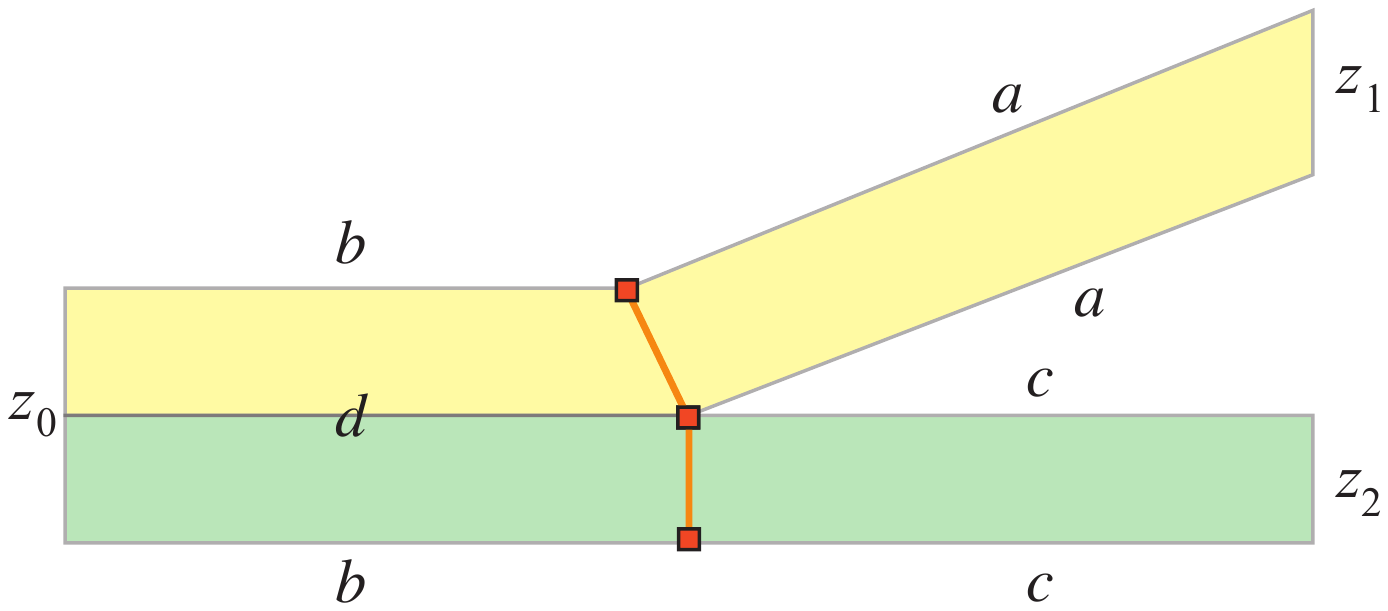}}\qquad
\subfigure[Approaching $(H_3)$]
{\includegraphics[width=5.5cm]{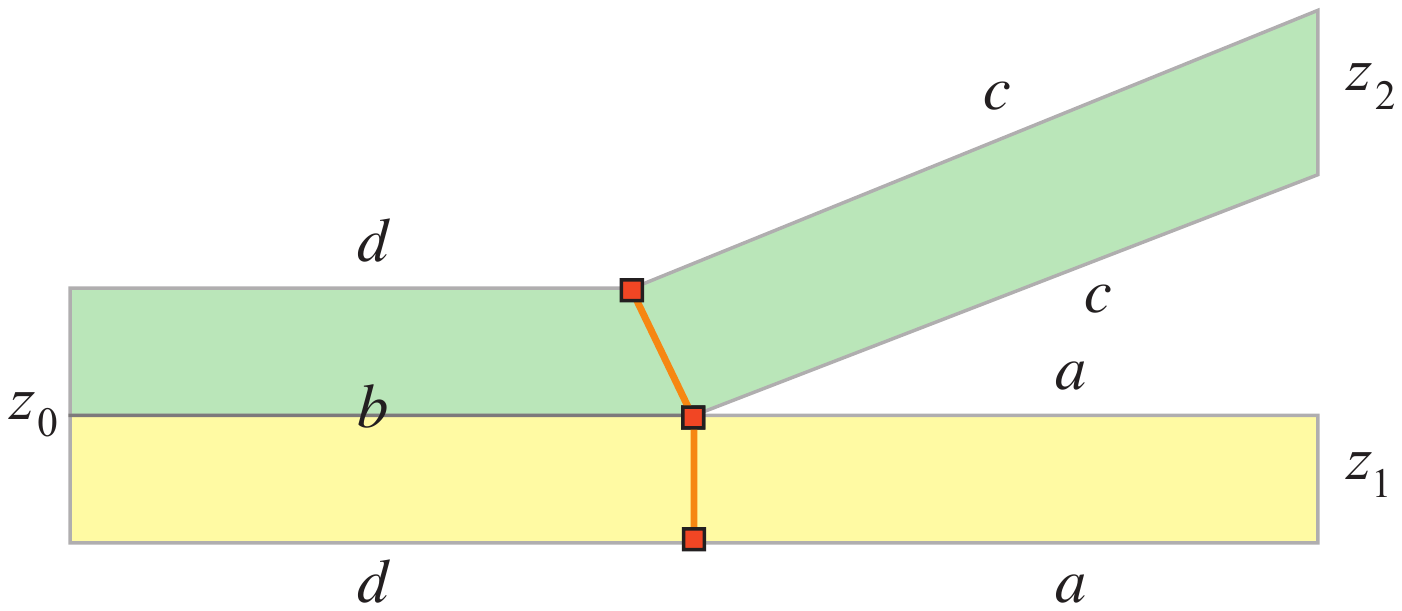}}\qquad
\subfigure[Approaching $(H_4)$]
{\includegraphics[width=5.5cm]{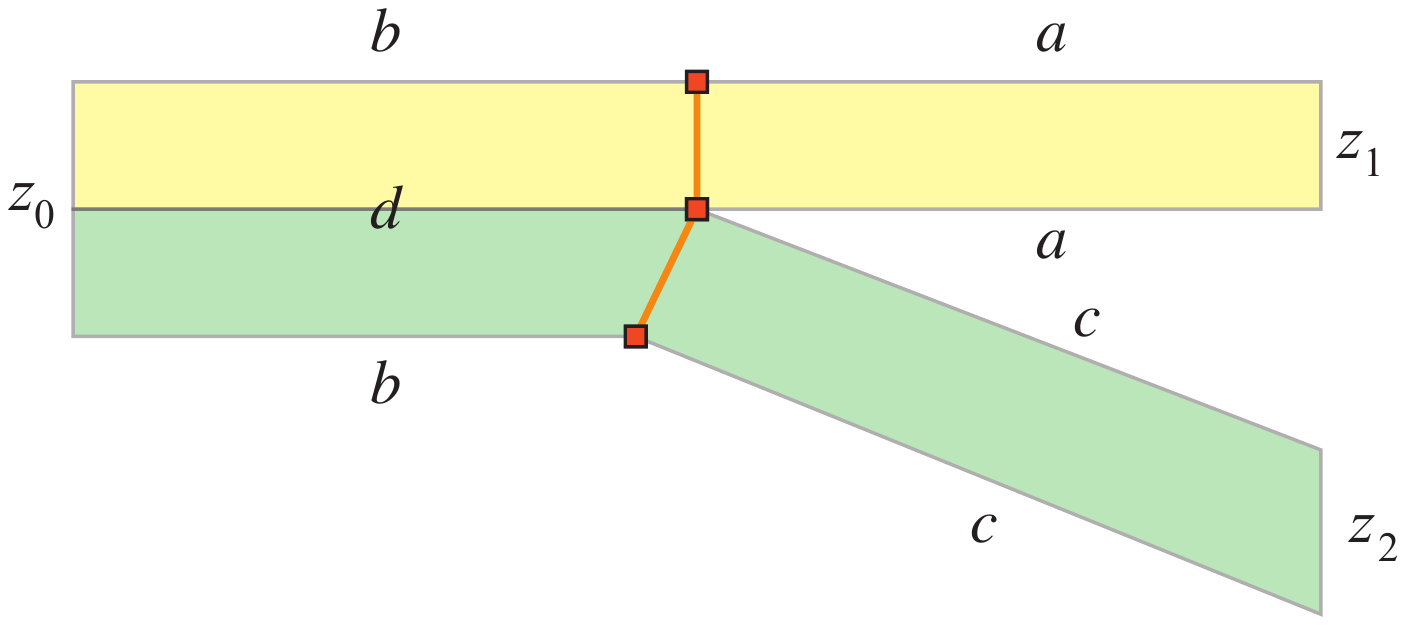}}\qquad
\caption{The four ways of seeing the gluings of the two strips for (V) to approach the four bifurcations of Figure~\ref{passage_homocline}.} \label{four_gluing}\end{center} \end{figure}

\section*{Acknowledgements}
The author is grateful to Colin Christopher and Jacques Hurtubise for helpful discussions.

\end{document}